\documentclass[journal]{IEEEtran}
%


\AtBeginDvi{} 

\usepackage{color}
\usepackage[pdftex]{graphicx}
\usepackage{framed}
\usepackage{bm}
\usepackage{comment}
\usepackage[verbose]{cite}
\usepackage{algorithm}
\usepackage{algorithmic}
\usepackage{slashbox}
\newtheorem{theorem}{Theorem}

\newtheorem{remark}{Remark}
\newtheorem{lemma}{Lemma}
\newtheorem{corollary}{Corollary}

\newtheorem{proposition}{Proposition}

\newtheorem{problem}{Problem}

\newtheorem{proof}{Proof}

\usepackage{latexsym}
\def\qed{\hfill $\Box$} 

\usepackage{ascmac}


%

%

%
\ifCLASSINFOpdf
\else
\fi
%
%

%
\usepackage{amsmath}
\hyphenation{op-tical net-works semi-conduc-tor}

\begin{document}
%
\title{Riemannian optimal identification method for linear systems with symmetric positive-definite matrix}
%
%
%

\author{Kazuhiro~Sato, Hiroyuki~Sato, and Tobias~Damm
\thanks{K. Sato is with the Division of Information and Communication Engineering,
 Kitami Institute of Technology,
 Hokkaido 090-8507, Japan,
email: ksato@mail.kitami-it.ac.jp}
\thanks{H. Sato is with the Department of Applied Mathematics, Graduate School of Informatics, Kyoto University, Kyoto 606-8501, Japan,
email: hsato@amp.i.kyoto-u.ac.jp}
\thanks{T. Damm is with the Department of Mathematics, University of Kaiserslautern, Kaiserslautern, Germany,
email: damm@mathematik.uni-kl.de}
}

\maketitle
\thispagestyle{empty}
\pagestyle{empty}

\begin{abstract}
This study develops identification methods for linear continuous-time symmetric systems,
such as electrical network systems, multi-agent network systems, and temperature dynamics in buildings.
To this end, we formulate three system identification problems for the corresponding discrete-time systems.
The first is a least-squares problem in which we wish to minimize the sum of squared errors between the true and model outputs on the product manifold of
the manifold of symmetric positive-definite matrices and two Euclidean spaces.
In the second problem, to reduce the search dimensions, the product manifold is replaced with the quotient set under a specified group action by the orthogonal group.
In the third problem, the manifold of symmetric positive-definite matrices in the first problem is replaced by the manifold of matrices with only positive diagonal elements.
In particular, we examine the quotient geometry in the second problem.
We propose Riemannian conjugate gradient methods for the three problems,
and select initial points using a popular subspace method.
The effectiveness of our proposed methods is demonstrated through numerical simulations and comparisons with the Gauss--Newton method, which is one of the most popular approach for solving least-squares problems.
\end{abstract}

\begin{IEEEkeywords}
Riemannian optimization, symmetry, system identification
\end{IEEEkeywords}

%
\IEEEpeerreviewmaketitle

\section{Introduction} \label{sec1}
%
%
%
%
\IEEEPARstart{M}{any} important systems involved in electrical networks \cite{dorfler2018electrical, vandenberghe1997optimal, willems1976realization}, multi-agent networks \cite{mesbahi2010graph, olfati2004consensus}, and temperature dynamics in buildings \cite{hatanaka2017physics, lidstrom2016optimal} can be modeled as  
\begin{align}
\begin{cases}
\dot{\hat{x}}(t) = F\hat{x}(t) + G\hat{u}(t), \\
\hat{y}(t) = H\hat{x}(t),
\end{cases} \label{1}
\end{align}
where $\hat{x}(t)\in {\bf R}^n$, $\hat{u}(t)\in {\bf R}^m$, and $\hat{y}(t)\in {\bf R}^p$ are the state, input, and output of the system, respectively,
and $F\in {\rm Sym}(n)$, $G\in {\bf R}^{n\times m}$, and $H\in {\bf R}^{p\times n}$ are constant matrices.
Because the matrix $F$ is symmetric, we call \eqref{1} a linear continuous-time symmetric system.
In controlling a system described by \eqref{1}, it is important to have an
 accurate identification of $(F,G,H)$.

However, no identification methods for system \eqref{1} have yet been developed.
More specifically, although many identification techniques have been developed for discrete-time systems, such as prediction error methods \cite{hanzon1993riemannian, ljung1999system, mckelvey2004data, peeters1994system, sato2017riemannian, wills2008gradient}
and subspace identification methods \cite{katayama2006subspace, larimore1990canonical, qin2006overview, van1994n4sid, van1995unifying, verhaegen1992subspace, verhaegen2016n2sid}
for discrete-time systems, as well as for continuous-time systems \cite{campi2008iterative, garnier2008contsid, johansson1999stochastic, maruta2013projection, ohsumi2002subspace, van1996continuous},
it is difficult to identify a symmetric matrix $F$ from the $K+1$ input/output data $\{ (u_0,y_0), (u_1,y_1),\ldots, (u_{K},y_{K})\}$ over the sampling interval $h$.
Here, $y_0 ,y_{1},\ldots, y_{K}$ are noisy data observed from the true system, which is different from \eqref{1}.
This is because no system identification method has been derived for the corresponding discrete-time system 
\begin{align}
\begin{cases}
\hat{x}_{k+1} = A \hat{x}_k + B \hat{u}_k, \\
\hat{y}_k = C\hat{x}_k,
\end{cases} \label{2}
\end{align}
where $\hat{x}_k:=\hat{x}(kh)$, $\hat{u}_k:= \hat{u}(kh)$, $\hat{y}_k:=\hat{y}(kh)$, and
\begin{align}
A &:= \exp( Fh) \in {\rm Sym}_+(n), \label{3} \\
B &:= \left( \int_0^h \exp(Ft) dt \right) G, \label{4} \\
C &:= H. \label{5}
\end{align}
That is, the existing methods in \cite{hanzon1993riemannian, ljung1999system, mckelvey2004data, peeters1994system, sato2017riemannian, wills2008gradient, katayama2006subspace, larimore1990canonical, qin2006overview, van1994n4sid, van1995unifying, verhaegen1992subspace, verhaegen2016n2sid}  for identifying the triplet $(A,B,C)$ do not provide a symmetric positive-definite matrix $A$.

For this reason, we present novel prediction error methods for identifying
\begin{align*}
\Theta:=(A,B,C)\in M:= {\rm Sym}_+(n)\times {\bf R}^{n\times m}\times {\bf R}^{p\times n}
\end{align*}
using the input/output data $\{ (u_0,y_0), (u_1,y_1),\ldots, (u_{K},y_{K})\}$
under the assumption that the matrix $A$ is stable.
That is, we identify the matrix $A$ to be symmetric positive definite.
If this is achieved, we can also obtain the matrices $F$, $G$, and $H$ by
\begin{align}
F &= \log A/h, \label{F}\\
G &= \left( \int_0^h \exp(Ft) dt \right)^{-1} B, \label{G}\\
H &= C. \label{H}
\end{align} 
In particular, the matrix $F$ is symmetric and is uniquely determined, because the map 
$\exp: {\rm Sym}(n)\rightarrow {\rm Sym}_+(n)$ is bijective \cite{lang2012fundamentals}.

To develop the prediction error methods, we formalize three different problems by introducing the Riemannian metric
\begin{align}
&\quad \langle (\xi_1,\eta_1,\zeta_1),  (\xi_2,\eta_2,\zeta_2) \rangle_{\Theta}  \nonumber\\
&:= {\rm tr}(A^{-1}\xi_1A^{-1}\xi_2) + {\rm tr}(\eta_1^{\top}\eta_2) + {\rm tr}(\zeta_1^{\top} \zeta_2) \label{metric_M}
\end{align}
for $(\xi_1,\eta_1,\zeta_1),  (\xi_2,\eta_2,\zeta_2)\in T_{\Theta} M$, where the metric has also been used for a model reduction problem \cite{sato2018TAC}.
The first problem is the least-squares problem of minimizing the sum of squared errors on the Riemannian manifold $M$.
In the second problem, to reduce the search dimension of the first problem,
the manifold $M$ is replaced by a quotient set.
In the third problem, we replace the ${\rm Sym}_+(n)$ component of $M$ with ${\rm Diag}_+(n)$.

The contributions of this paper are summarized as follows.\\
1) In Section \ref{SecIIB}, we show that 
the quotient set $N/O(n)$ in the second problem is indeed a manifold, where 
\begin{align}
N:= M \cap S_{\rm con}\cap S_{\rm ob}. \label{N_def}
\end{align}
Here,
$S_{\rm con} :=\{ (A,B,C)\in {\bf R}^{n\times n}\times {\bf R}^{n\times m}
\times {\bf R}^{p\times n}\,| \,  (A,B,C)\,\, {\rm is\,\, controllable}\}$ and
$S_{\rm ob} :=\{ (A,B,C)\in {\bf R}^{n\times n}\times {\bf R}^{n\times m}
\times {\bf R}^{p\times n}\,| \,  (A,B,C)\,\, {\rm is\,\, observable}\}$,
where we say that $(A,B,C)$ is controllable (resp. observable) if the corresponding discrete-time system
described by \eqref{2} is controllable (resp. observable).
Moreover, in Section \ref{SecIIB}, we prove that Riemannian metric \eqref{metric_M}
 on $M$ induces a Riemannian metric into $N/O(n)$
by using a general theorem, as shown in Appendix \ref{apeA}.
That is, the quotient set $N/O(n)$ is shown to be a Riemannian manifold.

\noindent
2) In Section \ref{Sec4}, we propose Riemannian conjugate gradient (CG) methods for solving the aforementioned three problems.
In developing the CG method for the first problem, we derive the Riemannian gradient of the objective function in terms of Riemannian metric \eqref{metric_M},
and use the concept of parallel transport.
For the modified second problem on the quotient manifold $N/O(n)$, the parallel transport in the first problem is replaced by the projection onto the horizontal space that is a subspace of a tangent space of the manifold $N$,
although the Riemannian gradient is the same.
In Section \ref{Sec3C2}, it is shown that the projection is obtained using the skew-symmetric solution to a linear matrix equation.
In Appendix \ref{Proof_Thm2}, we prove that there exists a unique skew-symmetric solution to the equation under a mild assumption.
Moreover, for the third problem, we derive another Riemannian gradient different from that in the first and second problems.
Furthermore, in Section \ref{Sec4D}, we propose a technique for choosing initial points in the proposed algorithms for solving the three problems
 based on a subspace method such as N4SID \cite{van1994n4sid}, MOESP \cite{verhaegen1992subspace}, CVA \cite{larimore1990canonical}, ORT \cite{katayama2006subspace}, or N2SID \cite{verhaegen2016n2sid}.

\noindent
3) We demonstrate the effectiveness of our proposed methods for single-input single-output (SISO) and multi-input multi-output (MIMO) cases:
\begin{itemize}
\item  Our proposed methods for solving the aforementioned three problems can produce $A\in {\rm Sym}_+(n)$, unlike the Gauss--Newton (GN) method, which has been widely used for solving least-squares problems.
In other words, we illustrate that the usual GN method as explained in Section \ref{Sec5} is not adequate for identifying system \eqref{1}.
\item Our proposed methods significantly improve the results produced by a modified MOESP method in terms of various indices.
\item In MIMO cases, the rate of instability in the estimated matrix $A_{\rm est}$ produced by our method when solving the third problem is much higher than that for solving the first and second problems.
In other words, the proposed methods for solving the first and second problems have a high degree of stability.
\item  A hybrid approach combining the CG methods for solving the first and second problems may be more efficient than the individual CG methods.
\end{itemize}

The remainder of this paper is organized as follows.
In Section \ref{Sec2}, we formulate the aforementioned three problems mathematically.
In particular, in Section \ref{SecIIB}, we show that the quotient set $N/O(n)$ is a manifold.
Moreover, we prove that Riemannian metric \eqref{metric_M} on $M$ induces a Riemannian metric on $N/O(n)$.
In Section \ref{Sec3}, we discuss Riemannian geometries of our problems.
In Section \ref{Sec4}, we propose optimization algorithms for solving the three problems.
In addition, we propose a technique for choosing an initial point in the algorithms.
In Section \ref{Sec5}, we summarize the GN method.
In Section \ref{Sec6}, we demonstrate the effectiveness of our proposed methods.
Finally, the conclusions of this study are presented in Section \ref{Sec7}.

{\it Notation:} The sets of real and complex numbers are denoted by ${\bf R}$ and ${\bf C}$, respectively.
The symbols ${\rm Sym}(n)$ and ${\rm Skew}(n)$ denote the vector spaces of symmetric matrices and skew-symmetric matrices in ${\bf R}^{n\times n}$, respectively.
The symbol ${\rm Diag}(n)$ is the vector space of diagonal matrices in ${\bf R}^{n\times n}$.
The manifold of symmetric positive definite matrices in ${\rm Sym}(n)$ is denoted by ${\rm Sym}_+(n)$.
The manifold of matrices with positive diagonal elements in ${\rm Diag}(n)$ is denoted by ${\rm Diag}_+(n)$.
The symbol $O(n)$ denotes the orthogonal group in ${\bf R}^{n\times n}$.
The tangent space at $p$ on a manifold $\mathcal{M}$ is denoted by $T_p \mathcal{M}$.
The identity matrix of size $n$ is denoted by $I_n$.
Given vectors $v=(v_i), w=(w_i)\in {\bf R}^n$, $(v,w)$ denotes the Euclidean inner product, i.e.,
$(v,w) = \sum_{i=1}^n v_i w_i$,
and
$||v||_2$ denotes the Euclidean norm, i.e.,
$||v||_2:= \sqrt{(v,v)} = \sqrt{v_1^2+v_2^2+\cdots +v_n^2}$.
Given a matrix $A\in {\bf R}^{n\times n}$, $||A||_F$ denotes the Frobenius norm, i.e.,
$||A||_F:= \sqrt{ {\rm tr} (A^{\top} A)}$,
where the superscript $\top$ denotes the transpose and ${\rm tr} (A)$ denotes the trace of $A$, i.e., the sum of the diagonal elements of $A$.
The symbol
$\lambda(A)$ denotes the set of eigenvalues of $A$,
and ${\rm sym}(A)$ and ${\rm sk}(A)$ denote the symmetric and skew-symmetric parts of $A$, respectively, i.e., ${\rm sym}(A)=\frac{A+A^{\top}}{2}$ and ${\rm sk}(A)=\frac{A-A^{\top}}{2}$.
Given a smooth function $f$ between finite dimensional manifolds $\mathcal{M}$ and $\mathcal{N}$,
the differential of $f$ at $x$ is denoted by ${\rm D}f(x)$.

\section{Problem settings} \label{Sec2}

This section presents the formulation of the three problems.

\subsection{Problem 1}

As described earlier, the aim of this study is to develop a novel prediction error method for identifying $\Theta\in M$ using the input/output data.
To this end, we consider the following problem.
\begin{framed}
\begin{problem}
Suppose that the input/output data $\{ (u_0,y_0), (u_1,y_1),\ldots, (u_K,y_K)\}$ and state dimension $n$ are given.
Then, find the minimizer of
\begin{align*}
&{\rm minimize}\quad f_1(\Theta):= ||e(\Theta)||^2_2 \\
&{\rm subject\, to}\quad \Theta\in M.
\end{align*}
\end{problem}
\vspace{-.4\baselineskip}
\end{framed}

\noindent
Here, 
\begin{align}
 e(\Theta) :=\begin{pmatrix}
y_1-\hat{y}_1(\Theta)  \\
y_2-\hat{y}_2(\Theta) \\
\vdots \\
y_K-\hat{y}_K(\Theta) 
\end{pmatrix}\in {\bf R}^{pK}, \label{pe_vector}
\end{align}
and $\hat{y}_k(\Theta)$ is $\hat{y}_k$ obtained by substituting the input data $u_k$ into $\hat{u}_k$ of \eqref{2}.
The initial state $\hat{x}_0\in {\bf R}^n$ in \eqref{2} is arbitrary.
Note that $\hat{y}_k(\Theta)$ is different from the output data $y_k$, which is obtained by observing the output of the true system.
That is,
\eqref{2} is a mathematical model but is not the true system.

In this paper, as mentioned in Section \ref{sec1}, we endow Riemannian metric \eqref{metric_M} into $M$.
Thus, Problem 1 is a Riemannian optimization problem.

\subsection{Problem 2} \label{SecIIB}

It is possible to reduce the dimension of the problem of minimizing $||e(\Theta)||_2^2$ under the assumption that the initial state $\hat{x}_0$ is equal to zero.
This is because $\Theta$ and 
\begin{align*}
U\circ \Theta:=(U^{\top}AU,U^{\top}B,CU)
\end{align*}
realize input/output equivalent systems for any $U\in O(n)$,
where $\circ$ denotes a group action of $O(n)$ on $M$.
That is, they attain the same value of the prediction error, i.e., $||e(\Theta)||_2 = ||e(U\circ \Theta)||_2$.
Moreover, if $\Theta\in M$, we have $U\circ \Theta\in M$ for any $U\in O(n)$.
This leads to the idea of equating $\Theta$ with $U\circ \Theta$ to reduce the dimension of the problem of minimizing $||e(\Theta)||_2^2$.

To this end, we endow $M$ with an equivalence relation $\sim$, where $\Theta_1 \sim \Theta_2$ if and only if
there exists some $U\in O(n)$ such that $\Theta_2 = U\circ \Theta_1$.
Defining the equivalence class $[\Theta]$ by $[\Theta] := \{ \Theta_1\in M | \Theta_1\sim \Theta\}$,
we can equate $\Theta$ with any $\Theta_1$ that is equivalent to $\Theta$.
Thus, instead of Problem 1, we can consider a minimization problem on the quotient set
$M/O(n):= \{ [\Theta] | \Theta \in M\}$.

However, it is an open problem whether the quotient set $M/O(n)$ is a manifold,
although this set is a Hausdorff space from Proposition 21.4 in \cite{lee2013intro}.
In fact, although there are topological studies on control systems \cite{afsari2017bundle, brockett1976some, delchamps1985global, hanzon1998overlapping, hazewinkel1977moduli},
there is no existing work on the quotient set $M/O(n)$.
Thus, it is difficult to guarantee that $\pi_M^{-1}([\Theta])$ is a submanifold of $M$ for all $\Theta\in M$,
because we cannot use well-known general results such as Proposition 3.4.4 in \cite{absil2008optimization}.
Here, the map $\pi_M:M\rightarrow M/O(n)$ denotes the canonical projection, i.e., $\pi_M (\Theta)=[\Theta]$ for any $\Theta\in M$.
If $\pi_M^{-1}([\Theta])$ is not a manifold for some $\Theta\in M$, then $T_{\Theta}\pi_M^{-1}([\Theta])$ cannot be defined.
That is, in this case, we cannot consider the vertical space in $T_{\Theta} M$.
As a result, it may be impossible to define the horizontal space that is the orthogonal complement of the vertical space with respect to metric \eqref{metric_M}.
This makes it difficult to develop an optimization method for solving the problem.

To resolve this issue, we consider the set $N$ defined by \eqref{N_def}
instead of $M$.
The set $N$ is an open submanifold of ${\bf R}^{n\times n}\times {\bf R}^{n\times m} \times {\bf R}^{p\times n}$,
because, in addition to $M$, $S_{\rm con}$ and $S_{\rm ob}$ are open sets in ${\bf R}^{n\times n}\times {\bf R}^{n\times m} \times {\bf R}^{p\times n}$, as shown in Proposition 3.3.12 in \cite{sontag1998mathematical}.
A group action of $O(n)$ on $N$, as in $M$, is given by
\begin{align}
U\circ \Theta:=(U^{\top}AU,U^{\top}B,CU), \label{action}
\end{align}
where $\Theta\in N$.
Then, $U\circ \Theta \in N$ for any $U\in O(n)$. 
By introducing the equivalence class $[\Theta]:= \{ \Theta_1\in N\,|\, \Theta_1\sim \Theta\}$, 
we can define the quotient set $N/O(n):= \{ [\Theta]\,|\, \Theta\in N\}$.

Unlike $M/O(n)$, we can guarantee that $N/O(n)$ is a manifold using the quotient manifold theorem \cite{lee2013intro}, which is explained in Appendix \ref{ape_quotient}.
To see this, we must confirm that action \eqref{action} is free and proper.
Action \eqref{action} is proper because the Lie group $O(n)$ is compact.
For a more detailed explanation, see Corollary 21.6  in \cite{lee2013intro}.
Thus, we show that action \eqref{action} is free.
Suppose that the general linear group $GL(n)$ acts on $N$ as
\begin{align*}
T\diamondsuit \Theta:=(T^{-1}AT,T^{-1}B,CT),\quad T \in GL(n),\,\, \Theta \in N.
\end{align*}
This action is free, as explained in Remark 6.5.10 in \cite{sontag1998mathematical}.
That is,
\begin{align}
\{T\in GL(n)\,|\, T\diamondsuit \Theta = \Theta\} = \{I_n\} \label{15_2}
\end{align}
for any $\Theta\in N$.
Moreover, we have that
\begin{align}
\{ I_n\} \subset \{U\in O(n) | U\circ \Theta = \Theta \} \subset \{T\in GL(n) | T\diamondsuit \Theta = \Theta\} \label{16_2}
\end{align}
for any $\Theta\in N$.
From \eqref{15_2} and \eqref{16_2}, action \eqref{action} is free.

Thus, the following problem is an optimization problem on a manifold.

\begin{framed}
\begin{problem}
Suppose that the input/output data $\{ (u_0,y_0), (u_1,y_1),\ldots, (u_K,y_K)\}$ and state dimension $n$ are given.
Then, find the minimizer of
\begin{align*}
&{\rm minimize}\quad f_2([\Theta]):= ||e(\Theta)||^2_2 \\
&{\rm subject\, to}\quad [\Theta]\in N/O(n).
\end{align*}
\end{problem}
\vspace{-.4\baselineskip}
\end{framed}

That is, we also develop a prediction error method on the quotient manifold $N/O(n)$.
Note that this development is different from that in \cite{sato2017riemannian}, which 
considered a group action of the general linear group $GL(n)$ on a manifold instead of that of $O(n)$.
It is not adequate to use the action in \cite{sato2017riemannian} for our problem, because this action does not, in general, preserve the symmetric positive-definiteness of the matrix $A$.
For this reason, we consider the group action of $O(n)$ on the manifold $N$.

To introduce a Riemannian metric into $N/O(n)$, we define Riemannian metric \eqref{metric_M} on $N$.

Because $N/O(n)$ is a quotient manifold, Proposition 3.4.4 in \cite{absil2008optimization} implies that $\pi^{-1}([\Theta])$ is an embedded submanifold of $N$ for any $\Theta\in N$, where
the map $\pi:N\rightarrow N/O(n)$ denotes the canonical projection, i.e., $\pi (\Theta)=[\Theta]$ for any $\Theta\in N$.
Thus, we can define the vertical space $\mathcal{V}_{\Theta}:=T_{\Theta} \pi^{-1}([\Theta])$ in $T_{\Theta} N$ for any $\Theta\in N$.
Moreover, from Proposition 3.9 in \cite{lee2013intro}, $T_{\Theta} M = T_{\Theta} N$ for any $\Theta \in N$,
because $N$ is an open set in $M$.
Hence, we can consider $\mathcal{V}_{\Theta}$ in $T_{\Theta} M$ for any $\Theta \in N\subset M$.
Additionally, the horizontal space $\mathcal{H}_{\Theta}$  can be defined as the orthogonal complement of the vertical space 
$\mathcal{V}_{\Theta}$ in $T_{\Theta}N$ with respect to metric \eqref{metric_M}.
Furthermore, the horizontal lift $\bar{\xi}_{\Theta}\in \mathcal{H}_{\Theta}$ of $\xi\in T_{[\Theta]}(N/O(n))$ is defined as the unique element of the horizontal space $\mathcal{H}_{\Theta}$ satisfying ${\rm D}\pi(\Theta)[\bar{\xi}_{\Theta}] =\xi$.
Fig.\,\ref{Fig0} presents a diagram of these concepts.

\begin{figure}[t]
\begin{center}
\includegraphics[scale=0.4]{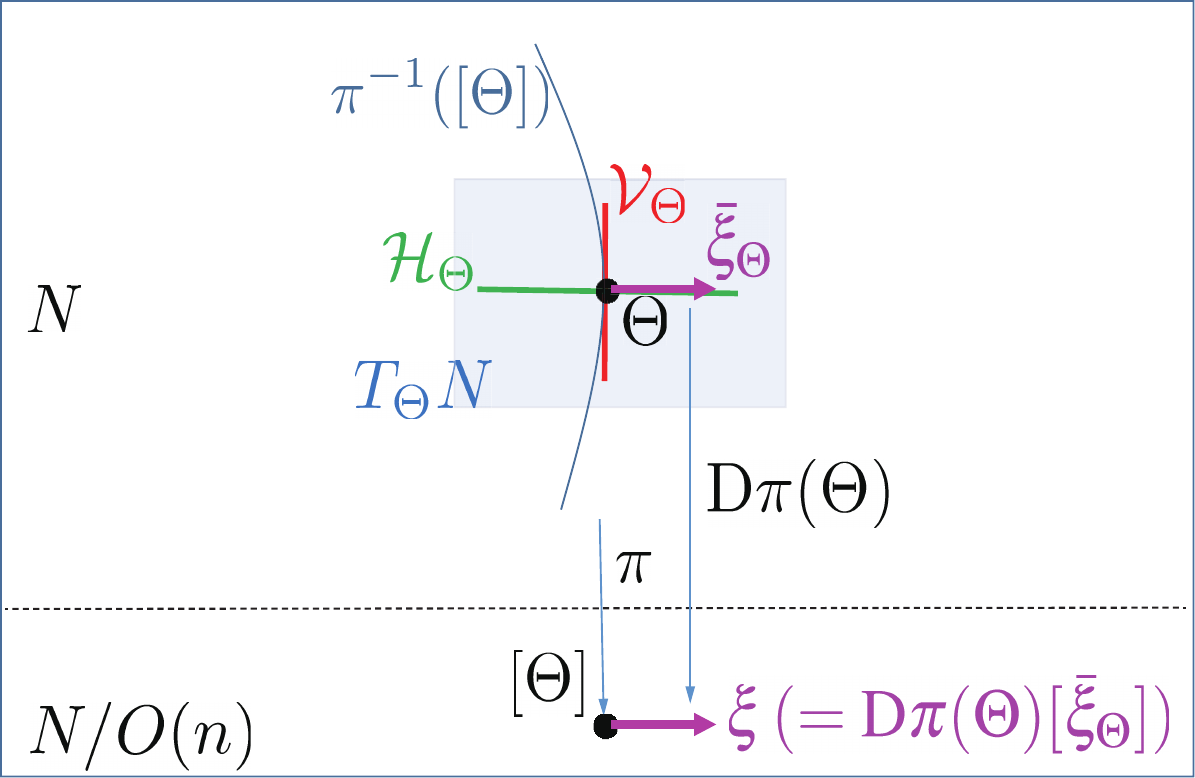}
\end{center}
\vspace{-1em}
\caption{Conceptual diagram of vertical space $\mathcal{V}_{\Theta}$, horizontal space $\mathcal{H}_{\Theta}$, and horizontal lift $\bar{\xi}_{\Theta}$.} \label{Fig0}
\end{figure}

In the following, we show that a Riemannian metric on $N/O(n)$ can be defined by
\begin{align}
\langle \xi, \zeta \rangle_{[\Theta]} := \langle \bar{\xi}_{\Theta}, \bar{\zeta}_{\Theta} \rangle_{\Theta}, \label{metric_quotient}
\end{align}
where $\xi,\zeta \in T_{[\Theta]} (N/O(n))$, $\Theta\in \pi^{-1}([\Theta])$, and $\bar{\xi}_{\Theta}$ and $\bar{\zeta}_{\Theta}$ are the horizontal lifts of $\xi$ and $\zeta$ at $\Theta\in N$, respectively.
Note that $\langle \cdot,\cdot \rangle_{\Theta}$ of the right-hand side of \eqref{metric_quotient} is Riemannian metric \eqref{metric_M}.

To this end, we must prove that 
\begin{align}
\langle \bar{\xi}_{\Theta_1}, \bar{\zeta}_{\Theta_1} \rangle_{\Theta_1} = \langle \bar{\xi}_{\Theta_2}, \bar{\zeta}_{\Theta_2} \rangle_{\Theta_2} \label{conclusion}
\end{align}
for any $\Theta_1, \Theta_2\in \pi^{-1}([\Theta])$.
To prove this, we 
first note that 
\eqref{metric_M} yields 
\begin{align}
\langle {\rm D}\phi_U(\Theta)[\xi_1], {\rm D}\phi_U(\Theta)[\xi_2] \rangle_{\phi_U(\Theta)} 
= \langle \xi_1, \xi_2 \rangle_{\Theta} \label{touka}
\end{align}
for any $\xi_1, \xi_2\in T_{\Theta}N$ and any $U\in O(n)$,
where $\phi_U(\Theta):=U\circ \Theta$.
That is, the group action $\phi_U$ is an isometry in terms of Riemannian metric \eqref{metric_M}.
Eq.\,\eqref{touka} implies the following theorem.

\begin{theorem} \label{thm3}
For any $U\in O(n)$,
\begin{align}
\bar{\xi}_{\phi_U(\Theta)} = {\rm D}\phi_U(\Theta)[\bar{\xi}_{\Theta}]. \label{14}
\end{align}
\end{theorem}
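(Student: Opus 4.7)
The plan is to use the defining characterization of horizontal lifts: $\bar{\xi}_{\phi_U(\Theta)}$ is the unique element of $\mathcal{H}_{\phi_U(\Theta)}$ whose image under $\mathrm{D}\pi(\phi_U(\Theta))$ equals $\xi$. So I would verify that the right-hand side $\mathrm{D}\phi_U(\Theta)[\bar{\xi}_\Theta]$ satisfies both of these properties, and then invoke uniqueness.

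First I would check the projection condition. Because $\phi_U$ moves a point within its own $O(n)$-orbit, we have $\pi \circ \phi_U = \pi$ on $N$. Differentiating this identity at $\Theta$ gives $\mathrm{D}\pi(\phi_U(\Theta)) \circ \mathrm{D}\phi_U(\Theta) = \mathrm{D}\pi(\Theta)$, and applying both sides to $\bar{\xi}_\Theta$ yields $\mathrm{D}\pi(\phi_U(\Theta))[\mathrm{D}\phi_U(\Theta)[\bar{\xi}_\Theta]] = \mathrm{D}\pi(\Theta)[\bar{\xi}_\Theta] = \xi$, as required.

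Next I would verify horizontality, namely $\mathrm{D}\phi_U(\Theta)[\bar{\xi}_\Theta] \in \mathcal{H}_{\phi_U(\Theta)}$. Since $\phi_U$ is a diffeomorphism of $N$ (its inverse is $\phi_{U^T}$) and it preserves fibers of $\pi$, its differential maps the vertical space bijectively onto the vertical space at the image point; that is, $\mathrm{D}\phi_U(\Theta)[\mathcal{V}_\Theta] = \mathcal{V}_{\phi_U(\Theta)}$. Now take any $\eta \in \mathcal{V}_{\phi_U(\Theta)}$ and write $\eta = \mathrm{D}\phi_U(\Theta)[\eta']$ for some $\eta' \in \mathcal{V}_\Theta$. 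Applying the isometry relation \eqref{touka} with $\xi_1 = \bar{\xi}_\Theta$ and $\xi_2 = \eta'$ gives
\begin{align*}
\langle \mathrm{D}\phi_U(\Theta)[\bar{\xi}_\Theta], \eta \rangle_{\phi_U(\Theta)}
= \langle \bar{\xi}_\Theta, \eta' \rangle_\Theta = 0,
\end{align*}
where the last equality holds because $\bar{\xi}_\Theta \in \mathcal{H}_\Theta$ is orthogonal to $\mathcal{V}_\Theta$ by definition of the horizontal space. Hence $\mathrm{D}\phi_U(\Theta)[\bar{\xi}_\Theta]$ is orthogonal to every vertical vector at $\phi_U(\Theta)$ and therefore lies in $\mathcal{H}_{\phi_U(\Theta)}$.

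Having established both properties, uniqueness of the horizontal lift forces $\bar{\xi}_{\phi_U(\Theta)} = \mathrm{D}\phi_U(\Theta)[\bar{\xi}_\Theta]$, completing the argument. I do not foresee a serious obstacle: the isometry property \eqref{touka} is already in hand, and the only conceptual points to state cleanly are that $\phi_U$ is an isometric diffeomorphism that commutes with $\pi$, so it sends horizontal vectors to horizontal vectors and vertical vectors to vertical vectors. The mildest technical care is to justify $\mathrm{D}\phi_U(\Theta)[\mathcal{V}_\Theta] = \mathcal{V}_{\phi_U(\Theta)}$, which follows because $\phi_U$ restricts to a diffeomorphism between the fibers $\pi^{-1}([\Theta])$ and $\pi^{-1}([\phi_U(\Theta)]) = \pi^{-1}([\Theta])$.
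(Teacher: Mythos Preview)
Your proof is correct and follows essentially the same approach as the paper: verify that $\mathrm{D}\phi_U(\Theta)[\bar{\xi}_\Theta]$ satisfies the two defining properties of the horizontal lift at $\phi_U(\Theta)$ by using $\pi\circ\phi_U=\pi$ for the projection condition and the isometry relation \eqref{touka} together with $\mathrm{D}\phi_U(\Theta)[\mathcal{V}_\Theta]=\mathcal{V}_{\phi_U(\Theta)}$ for horizontality. The paper merely packages the same ingredients as a general lemma--theorem--corollary chain (for an arbitrary group acting isometrically on a Riemannian manifold), whereas you give the argument directly in the specific setting; the mathematical content is identical.
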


We provide the proof of Theorem \ref{thm3} in Appendix \ref{apeA}.

Using Theorem \ref{thm3} and \eqref{touka}, we can prove \eqref{conclusion} as follows:
For any $\Theta_1, \Theta_2\in \pi^{-1}([\Theta])$, there exists some $U\in O(n)$ such that $\Theta_2 = \phi_U (\Theta_1)$.
Thus, 
\begin{align*}
\langle \bar{\xi}_{\Theta_2}, \bar{\zeta}_{\Theta_2} \rangle_{\Theta_2} &= \langle \bar{\xi}_{\phi_U(\Theta_1)}, \bar{\zeta}_{\phi_U(\Theta_1)} \rangle_{\phi_U(\Theta_1)}  \\
&= \langle {\rm D}\phi_U (\Theta_1)[\bar{\xi}_{\Theta_1}],  {\rm D}\phi_U (\Theta_1)[\bar{\zeta}_{\Theta_1}] \rangle_{\phi_U(\Theta_1)} \\
&= \langle \bar{\xi}_{\Theta_1}, \bar{\zeta}_{\Theta_1} \rangle_{\Theta_1},
\end{align*}
where the second and third equalities follow from \eqref{14} and \eqref{touka}, respectively.
Note that \eqref{conclusion} is based on isometric condition \eqref{touka} in terms of Riemannian metric \eqref{metric_M}.
In other words, if \eqref{touka} is not satisfied, we cannot conclude that \eqref{conclusion} holds.

Based on the above discussion, 
$N/O(n)$ endowed with \eqref{metric_quotient} is a Riemannian quotient manifold of $N$, and the natural projection $\pi: N\rightarrow N/O(n)$ becomes a Riemannian submersion.
That is, the projection $\pi$ is a smooth submersion, and for any $\Theta\in N$, the differential ${\rm D}\pi_{\Theta}$ is an isometry between the horizontal space $\mathcal{H}_{\Theta}$ and $T_{\pi(\Theta)}(N/O(n))$.
Moreover, \eqref{metric_quotient} is a unique Riemannian metric such that $\pi:N\rightarrow N/O(n)$ is a Riemannian submersion. 
This is
because, as shown in previously, $O(n)$ is a Lie group of isometries of the manifold $N$ endowed with \eqref{metric_M} that acts smoothly, freely, and properly on $N$.
For a more general description, see Proposition 2.28 in \cite{gallot1993riemannian}.
This means that if we introduce Riemannian metric \eqref{metric_M} into $N$,
the geometry of $N/O(n)$ is uniquely determined.

To summarize, Problem 2 is a Riemannian optimization problem, and most of the geometry of $N/O(n)$ can be studied by lifting from $N/O(n)$ to $N$.

\subsection{Problem 3}

Moreover, we can consider a simpler problem than Problems 1 and 2.
This is because,
for any $\Theta\in M$, there is a unique $\tilde{U}\in O(n)$ such that
\begin{align*}
\tilde{U}\circ \Theta = (\Lambda, \tilde{U}^{\top}B, C\tilde{U}),
\end{align*}
where $\Lambda\in {\rm Diag}_+(n)$.
That is, the above $\Theta$ and $\tilde{U}\circ\Theta$ realize input/output equivalent systems, i.e., $||e(\Theta)||_2=||e(\tilde{U}\circ\Theta)||_2$,
under the assumption that the initial state $\hat{x}_0$ is equal to zero.
The simpler problem is formulated as follows.

\begin{framed}
\begin{problem}
Suppose that the input/output data $\{ (u_0,y_0), (u_1,y_1),\ldots, (u_K,y_K)\}$ and state dimension $n$ are given.
Then, find the minimizer of
\begin{align*}
&{\rm minimize}\quad f_3(\Theta):= ||e(\Theta)||^2_2 \\
&{\rm subject\, to}\quad \Theta\in \tilde{M}.
\end{align*}
\end{problem}
\vspace{-.4\baselineskip}
\end{framed}

\noindent
Here, $\tilde{M}:={\rm Diag}_+(n)\times {\bf R}^{n\times m}\times {\bf R}^{p\times n}$.
However, we demonstrate in Section \ref{Sec6}  that, if the output data $y_0, y_1,\ldots, y_K$ are noisy,
the results provided by our algorithm for solving Problem 3 are more noise-sensitive than those produced by our algorithms for solving Problems 1 and 2.

Similar to Riemannian metric \eqref{metric_M} on $M$,
we define the Riemannian metric on $\tilde{M}$ as 
\begin{align}
&\quad \langle (\xi_1,\eta_1,\zeta_1),  (\xi_2,\eta_2,\zeta_2) \rangle_{\Theta}  \nonumber\\
&:= {\rm tr}(A^{-1}\xi_1A^{-1}\xi_2) + {\rm tr}(\eta_1^{\top}\eta_2) + {\rm tr}(\zeta_1^{\top} \zeta_2) \nonumber\\
&= {\rm tr}((A^{-1})^2\xi_1\xi_2) + {\rm tr}(\eta_1^{\top}\eta_2) + {\rm tr}(\zeta_1^{\top} \zeta_2)\label{metric_tildeM}
\end{align}
for $(\xi_1,\eta_1,\zeta_1),  (\xi_2,\eta_2,\zeta_2)\in T_{\Theta} \tilde{M}$.
Here, the second equality follows from the fact that $A^{-1}$, $\xi_1$, and $\xi_2$ are diagonal matrices.
Thus, Problem 3 is a Riemannian optimization problem.

\subsection{Another Riemannian metric on $M$, $N/O(n)$, and $\tilde{M}$}

Instead of Riemannian metric \eqref{metric_M}, we can introduce the Riemannian metric
\begin{align}
&\quad \langle (\xi_1,\eta_1,\zeta_1),  (\xi_2,\eta_2,\zeta_2) \rangle_{\Theta}  \nonumber\\
&:= {\rm tr}(\xi_1\xi_2) + {\rm tr}(\eta_1^{\top}\eta_2) + {\rm tr}(\zeta_1^{\top} \zeta_2) \label{metric_M2}
\end{align}
into the manifolds $M$ and $\tilde{M}$.
Moreover,
even if we define Riemannian metric \eqref{metric_M2} into $N$, a Riemannian metric on $N/O(n)$ can be defined by \eqref{metric_quotient}.
This is because \eqref{conclusion} holds under metric \eqref{metric_M2}.
That is,
in addition to Riemannian metric \eqref{metric_M},
Riemannian metric \eqref{metric_M2} also implies that the natural projection $\pi:N\rightarrow N/O(n)$ is a Riemannian submersion.
However, this simple metric is not adequate for solving Problems 1, 2, and 3,
as explained in Section \ref{sec4A}.

\begin{remark}
In this paper, we assume that the state dimension $n$ is given.
In practice, the dimension $n$ must be determined before solving Problems 1, 2, and 3.
For example, we can determine $n$ by using
Akaike's information criterion \cite{akaike1974new} or calculating the singular value decomposition of a matrix related to the input and output matrices \cite{verhaegen1994identification}.
\end{remark}

\begin{remark}
As mentioned in Section \ref{sec1}, we can identify $(F,G,H)$ in \eqref{1} using \eqref{F}, \eqref{G} and \eqref{H} after the identification of $(A,B,C)$ in \eqref{2}.
In addition to Problems 1, 2, and 3, we consider the following problem.
\begin{align*}
&{\rm minimize}\quad ||\exp(Fh)-A||^2_F \\
&{\rm subject\, to}\quad F\in {\rm Sym}(n).
\end{align*}
One may think that, by solving the above problem, $F\in {\rm Sym}(n)$ for \eqref{1} can be obtained even if $A\not\in {\rm Sym}_+(n)$.
However, this is not true.
For example, if $A=\begin{pmatrix}
0 & 0 \\
0 & 1
\end{pmatrix}$, then there is no solution $F$ to the above problem.
In fact, the infimum of the objective function is 0, whereas this value cannot be obtained with any $F\in {\rm Sym}(n)$.
\end{remark}

\begin{remark} \label{remark_future}
Note that system \eqref{1} does not correspond to a symmetric continuous-time system discussed in \cite{willems1976realization, tan2001stabilization}.
Here, system \eqref{1} is said to be symmetric in the sense of the definition in \cite{willems1976realization, tan2001stabilization} if there exists some $T\in GL(n)\cap {\rm Sym}(n)$ such that
$F^{\top} T = TF$ and
$H^{\top} = TG$.
\end{remark}


\section{Geometries of Problems 1, 2, and 3}  \label{Sec3}

\subsection{Riemannian optimization}

In preparation for subsequent subsections,
we introduce the concepts of the exponential mapping and the Riemannian gradient for Riemannian optimization \cite{absil2008optimization, edelman1998geometry, helmke1994optimization}, and provide a brief description of the optimization algorithm.
In this subsection, we consider a general Riemannian optimization problem of minimizing an objective function $f$ defined on a Riemannian manifold $\mathcal{M}$.
That is, $\mathcal{M}$ is equipped with a Riemannian metric $\langle \cdot, \cdot \rangle$ that endows the tangent space $T_x\mathcal{M}$ at each point $x\in \mathcal{M}$ with
an inner product.

Fig.\,\ref{Fig_opt_image} illustrates an optimization process on $\mathcal{M}$.
As shown in this figure, the next point is determined by using geodesics and search direction vectors.
The following explains the details.

\begin{figure}[t]
\begin{center}
\includegraphics[scale=0.26]{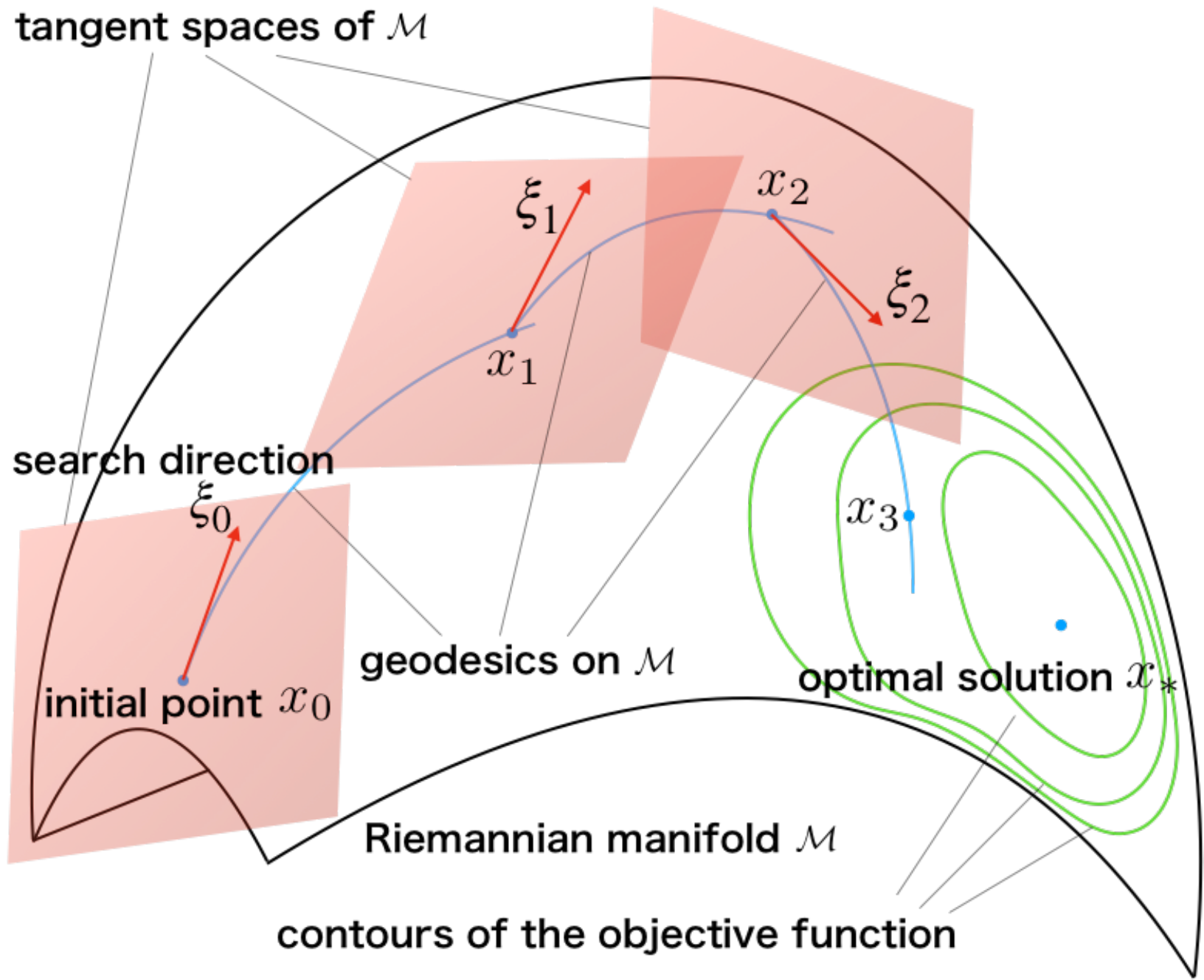}
\end{center}
\vspace{-1em}
\caption{Optimization process on a Riemannian manifold $\mathcal{M}$.} \label{Fig_opt_image}
\end{figure}

\subsubsection{Exponential mapping}
For the purpose of optimization on a Riemannian manifold $\mathcal{M}$, the update formula $x+\xi$ does not make sense for $x\in \mathcal{M}$ and $\xi\in T_x\mathcal{M}$.
This is in contrast to the case of optimization on a Euclidean space $\mathcal{E}$.
That is, on $\mathcal{E}$, we can compute a point $x_+\in \mathcal{E}$ from the current point $x\in \mathcal{E}$ and search direction $d\in \mathcal{E}$ as $x_+=x+d$.
Thus, we seek the next point $x_+$ on a curve called a geodesic on $\mathcal{M}$ emanating from $x$ in the direction of $\xi$.
For any $x,y\in \mathcal{M}$, on a geodesic between two points $x$ and $y$ that are sufficiently close,
the path along the geodesic is the shortest among all curves connecting $x$ and $y$.
It is known that, for any $\xi\in T_x\mathcal{M}$, there exists an interval $I\subset {\bf R}$ around $0$ and a unique geodesic $\Gamma_{(x,\xi)}:I\rightarrow \mathcal{M}$ such that
$\Gamma_{(x,\xi)}(0)=x$ and $\dot{\Gamma}_{(x,\xi)}(0) = \xi$.
The exponential mapping ${\rm Exp}$ at $x\in \mathcal{M}$ can be defined through the geodesic as
\begin{align*}
{\rm Exp}_x(\xi):= \Gamma_{(x,\xi)}(1),
\end{align*}
because the geodesic $\Gamma_{(x,\xi)}$ has the homogeneity property $\Gamma_{(x,a\xi)}(t) = \Gamma_{(x,\xi)}(at)$ for any $a\in {\bf R}$ satisfying $at\in I$. 

\subsubsection{Riemannian gradient}

In addition to the exponential mapping, we need a Riemannian gradient to solve our problems.
The Riemannian gradient ${\rm grad}\,f(x)$ of $f$ at $x\in \mathcal{M}$ is defined as a tangent vector at $x$ that satisfies
\begin{align*}
{\rm D}f(x)[\xi] = \langle {\rm grad}\,f(x), \xi\rangle_x
\end{align*}
for any $\xi\in T_x\mathcal{M}$, where ${\rm D}f(x)[\xi]$ is defined as
\begin{align*}
{\rm D}f(x)[\xi] := (\xi f)(x).
\end{align*}
Note that a tangent vector can be identified with a derivative.

\subsubsection{Algorithm}
The update formula of
a gradient algorithm for minimizing the objective function $f$ on $\mathcal{M}$ is given by
\begin{align*}
x_{k+1} = {\rm Exp}_{x_k}(\xi_k)
\end{align*}
with an initial point $x_0\in \mathcal{M}$,
where $\xi_k\in T_{x_k}\mathcal{M}$ is a search direction defined by using the Riemannian gradients.

\subsection{Geometry of Problem 1}

In first-order optimization algorithms such as the steepest descent and conjugate gradient methods on the manifold $M$ equipped with Riemannian metric \eqref{metric_M},
we need the Riemannian gradient of the objective function $f_1$.

Let $\bar{f}_1$ denote the extension of the objective function $f_1$ to the ambient Euclidean space ${\bf R}^{n\times n} \times {\bf R}^{n\times m} \times {\bf R}^{p\times n}$.
Then, the directional derivative of $\bar{f}_1$ at $\Theta\in M$ along $\xi=(\xi_A,\xi_B,\xi_C)\in T_{\Theta} M$ is given by
\begin{align}
{\rm D}\bar{f}_1(\Theta)[\xi] = 2({\rm D}e(\Theta)[\xi],e(\theta)), \label{Df}
\end{align}
where
\begin{align}
{\rm D}e(\Theta)[\xi] = \begin{pmatrix}
-{\rm D}\hat{y}_1(\Theta)[\xi] \\
-{\rm D}\hat{y}_2(\Theta)[\xi] \\
\vdots \\
-{\rm D}\hat{y}_K(\Theta)[\xi] 
\end{pmatrix}. \label{De}
\end{align}
Eq.\,\eqref{2} implies that 
\begin{align}
{\rm D}\hat{y}_k(\Theta)[\xi] = & C\sum_{i=0}^{k-1} A^{k-i-1} (\xi_A \hat{x}_i +\xi_B u_i) + \xi_C\hat{x}_k. \label{Dy}
\end{align}
It follows from \eqref{Df}, \eqref{De}, and \eqref{Dy} that
\begin{align}
{\rm D}\bar{f}_1(\Theta)[\xi] = {\rm tr}(\xi_A\, {\rm sym}(G_A)) + {\rm tr}(\xi_B^{\top} G_B) + {\rm tr}(\xi_C^{\top} G_C), \label{Df2}
\end{align}
where
\begin{align}
G_A &:= -2\sum_{k=1}^K \sum_{i=0}^{k-1} A^{k-i-1} C^{\top} (y_k-\hat{y}_k(\Theta)) \hat{x}_i^{\top}, \label{G_A}\\
G_B &:= -2\sum_{k=1}^K \sum_{i=0}^{k-1} A^{k-i-1} C^{\top} (y_k-\hat{y}_k(\Theta)) u_i^{\top}, \label{G_B}\\
G_C &:= -2\sum_{k=1}^K (y_k-\hat{y}_k(\Theta)) \hat{x}_k^{\top}. \label{G_C}
\end{align}
Here, we used the property $\xi_A=\xi_A^{\top}$.
Thus, the Euclidean gradient of $\bar{f}_1$ at $\Theta$ is given by
\begin{align}
\nabla \bar{f}_1(\Theta) = (G_A,G_B,G_C). \label{Euclid_grad}
\end{align}

\noindent
In \cite{sato2017riemannian}, we can find a similar derivation for a more complicated system. 
Because we introduced Riemannian metric \eqref{metric_M}, the Euclidean gradient in \eqref{Euclid_grad} yields  the Riemannian gradient
\begin{align}
{\rm grad}\, f_1(\Theta) = (A\,{\rm sym}(G_A)A, G_B, G_C). \label{grad1}
\end{align}
For a detailed explanation, see Appendix \ref{ape0}.

\subsection{Geometry of Problem 2} \label{Sec3C}

Because the natural projection $\pi:N\rightarrow N/O(n)$ is a Riemannian submersion,
most of the geometry of $N/O(n)$ can be studied by lifting from $N/O(n)$ to $N$,
as described in Section \ref{SecIIB}.

\subsubsection{Orthogonal projection onto the horizontal space $\mathcal{H}_{\Theta}$} \label{Sec3C2}

In Section \ref{Sec4B}, we need the concept of vector transport (which is a generalized concept of parallel transport \cite{absil2008optimization}) on the manifold $N/O(n)$ equipped with Riemannian metric \eqref{metric_quotient}
to develop a Riemannian conjugate gradient method. 
Here, note that we have introduced Riemannian metric \eqref{metric_M} into $N$.
To this end, for any $\Theta\in N$, we use the orthogonal projection $P_{\Theta}$ onto the horizontal space $\mathcal{H}_{\Theta}$.

To derive $P_{\Theta}$, we need to explicitly describe the vertical space $\mathcal{V}_{\Theta}$ and the horizontal space $\mathcal{H}_{\Theta}$.
First, we specify $\mathcal{V}_{\Theta}$.
Consider any curve $\Theta (t)$ on $\pi^{-1}([\Theta])\subset N$ with $\Theta(0)=\Theta$ that is expressed as
\begin{align*}
\Theta (t) = (U^{\top}(t) AU(t), U^{\top}(t)B,CU(t)),
\end{align*}
where $U(t)$ denotes a curve on $O(n)$ with $U(0)=I_n$.
Differentiating both sides with respect to $t$, we obtain
\begin{align*}
\dot{\Theta}(0) = (\dot{U}^{\top}(0)A+A\dot{U}(0), \dot{U}^{\top}(0)B, C\dot{U}(0)),
\end{align*}
where $\dot{U}(0)\in T_{I_n}O(n)\cong {\rm Skew}(n)$.
Thus, we have that
\begin{align*}
\mathcal{V}_{\Theta} = \{ (-U'A+AU', -U'B, CU')\, |\, U'\in {\rm Skew}(n) \}.
\end{align*}
Next, we characterize the horizontal space $\mathcal{H}_{\Theta}$.
Let $(A',B',C') \in\mathcal{H}_{\Theta}$.
That is,
\begin{align}
\langle (-U'A+AU', -U'B, CU'), (A',B',C') \rangle_{\Theta} = 0 \label{10}
\end{align}
for all $U'\in {\rm Skew}(n)$. This means that
\begin{align*}
{\rm tr}(U' ( 2A'A^{-1} +BB'^{\top}+C^{\top}C')) = 0.
\end{align*}
Because $U'\in {\rm Skew}(n)$ is arbitrary, we conclude that
\begin{align*}
2A'A^{-1} +BB'^{\top}+C^{\top}C' \in {\rm Sym}(n).
\end{align*}
That is, 
\begin{align*}
{\rm sk}(2A'A^{-1} +BB'^{\top}+C^{\top}C')=0.
\end{align*}
Thus, 
\begin{align*}
\mathcal{H}_{\Theta} \subset\{ (A',B',C')\,|\, {\rm sk}(2A'A^{-1} +BB'^{\top}+C^{\top}C')=0 \}.
\end{align*}
Conversely, if $(A',B',C')\in \{ (A',B',C')\,|\, {\rm sk}(2A'A^{-1} +BB'^{\top}+C^{\top}C')=0 \}$, we have that $(A',B',C')\in \mathcal{H}_{\Theta}$, because \eqref{10} holds.
Hence, we obtain
\begin{align}
\mathcal{H}_{\Theta} =\{ (A',B',C')\,|\, {\rm sk}(2A'A^{-1} +BB'^{\top}+C^{\top}C')=0 \}. \label{horizontal}
\end{align}

We are in a position to describe the orthogonal projection $P_{\Theta}$ onto the horizontal space $\mathcal{H}_{\Theta}$.
\begin{theorem} \label{Thm1}
The orthogonal projection $P_{\Theta}$ onto $\mathcal{H}_{\Theta}$ is given by 
\begin{align}
P_{\Theta}(\eta) = \eta +(XA-AX,XB,-CX), \label{OP}
\end{align}
where $\eta=(a,b,c)\in T_{\Theta} N$, and $X$ is the skew-symmetric solution to the linear matrix equation
\begin{align}
\mathcal{L}_1(X) +2\mathcal{L}_0(X) + \beta = 0, \label{linear}
\end{align}
where the linear matrix mappings $\mathcal{L}_0, \mathcal{L}_1: {\bf R}^{n\times n}\rightarrow {\bf R}^{n\times n}$ are defined by 
\begin{align*}
\mathcal{L}_0(X) &:= AXA^{-1}+A^{-1}XA -2X, \\
\mathcal{L}_1(X) &:= (BB^{\top}+C^{\top}C)X+X(BB^{\top}+C^{\top}C), 
\end{align*}
and $\beta := 2{\rm sk}(2A^{-1}a+bB^{\top}+c^{\top}C)$.
\end{theorem}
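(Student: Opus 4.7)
The plan is to use the orthogonal decomposition $T_{\Theta}N = \mathcal{V}_{\Theta} \oplus \mathcal{H}_{\Theta}$ with respect to metric \eqref{metric_M}, which characterizes the projection as $P_{\Theta}(\eta) = \eta - \eta_{\mathcal{V}}$ for the unique vertical component $\eta_{\mathcal{V}}$. From the explicit description of $\mathcal{V}_{\Theta}$ derived just above, every vertical vector has the form $(AU' - U'A, -U'B, CU')$ for some $U' \in {\rm Skew}(n)$. Parametrizing $\eta_{\mathcal{V}} = (AX - XA, -XB, CX)$ with $X \in {\rm Skew}(n)$ to be determined and subtracting immediately yields $P_{\Theta}(\eta) = \eta + (XA - AX, XB, -CX)$, which matches \eqref{OP}; only the equation governing $X$ remains.

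To pin down $X$, I would impose the horizontality condition in \eqref{horizontal} on the candidate, namely
\begin{align*}
{\rm sk}\!\left(2(a + XA - AX)A^{-1} + B(b + XB)^T + C^T(c - CX)\right) = 0.
\end{align*}
Using $A^T = A$, $X^T = -X$, and $a \in T_A{\rm Sym}_+(n) = {\rm Sym}(n)$, the expression inside ${\rm sk}(\cdot)$ separates additively into a part depending only on $\eta$, equal to $2aA^{-1} + Bb^T + C^Tc$, and a part linear in $X$, equal to $2X - 2AXA^{-1} - BB^TX - C^TCX$. Transposition flips the sign of the skew part of each summand in the $\eta$-part (for instance ${\rm sk}(aA^{-1}) = -{\rm sk}(A^{-1}a)$ since $a$ and $A^{-1}$ are symmetric), so this contribution equals $-\beta/2$ with $\beta$ as defined in the theorem.

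For the $X$-linear part, one computes ${\rm sk}(2X - 2AXA^{-1}) = 2X - AXA^{-1} - A^{-1}XA = -\mathcal{L}_0(X)$ and ${\rm sk}(-BB^TX - C^TCX) = -\tfrac{1}{2}\mathcal{L}_1(X)$, both via $X^T = -X$ and $A^T = A$. Summing the two contributions and multiplying through by $-2$ produces precisely \eqref{linear}. Existence and uniqueness of a skew-symmetric solution $X$, established in Appendix \ref{Proof_Thm2}, make $P_{\Theta}$ well-defined, and $a + XA - AX \in {\rm Sym}(n)$ follows from $(XA - AX)^T = XA - AX$, so the output indeed lies in $T_{\Theta}N$. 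The main obstacle will be the sign bookkeeping in these skew-part computations; in particular, the discrepancy between $2A^{-1}a$ appearing in the definition of $\beta$ and the $2aA^{-1}$ arising naturally in the expansion is reconciled only through the identity ${\rm sk}(aA^{-1}) = -{\rm sk}(A^{-1}a)$, and the coefficient $2$ on $\mathcal{L}_0$ versus $1$ on $\mathcal{L}_1$ in \eqref{linear} traces back to the contribution of the first (matrix-inversion) summand being twice as large after the skew operation as the remaining two summands.
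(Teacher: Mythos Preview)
Your proposal is correct and follows essentially the same approach as the paper's proof: decompose $\eta$ into vertical and horizontal parts, parametrize the vertical component by $X\in{\rm Skew}(n)$, and impose the horizontality condition \eqref{horizontal} on the remainder to derive \eqref{linear}. The paper's proof is considerably terser, simply stating that \eqref{linear} follows from the horizontality condition ``because $a^T=a$ and $X^T=-X$,'' whereas you have spelled out the skew-part computations and the sign identities that justify this step; your additional remarks on well-definedness via Theorem~\ref{Thm2} and on the output lying in $T_\Theta N$ are sound but go slightly beyond what the paper records.
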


We provide the proof in Appendix \ref{Proof_Thm1}.

We can guarantee that there exists a unique solution $X\in {\rm Skew}(n)$ to \eqref{linear} under the assumption
\begin{align}
\dim ( {\rm Ker}\,(\lambda I_n-A) \cap {\rm Ker}\, B^{\top} \cap {\rm Ker}\, C) \leq 1\,\,\, {\rm for\,\,any}\,\,\lambda\in {\bf R}. \label{assumption}
\end{align}
Assumption \eqref{assumption} holds if matrix $A$ has only simple eigenvalues, because then $\dim({\rm Ker}\, (\lambda I_n-A))\leq 1$ for all $\lambda \in {\bf R}$.
Furthermore, if $(A,C)$ is observable, i.e., 
\begin{align*}
{\rm rank} \begin{pmatrix}
\lambda I_n -A \\
C
\end{pmatrix} = n \Leftrightarrow
{\rm Ker}\, (\lambda I_n-A) \cap {\rm Ker}\, C= \{ 0\}
\end{align*}
for all $\lambda\in {\bf C}$, then \eqref{assumption} holds.
Analogously, the controllability of $(A,B)$, i.e., 
\begin{align*}
{\rm rank} \begin{pmatrix}
\lambda I_n -A  & B
\end{pmatrix} =
{\rm rank} \begin{pmatrix}
\lambda I_n -A \\
B^{\top}
\end{pmatrix} = n,
\end{align*}
also implies \eqref{assumption}.

\begin{theorem} \label{Thm2}
Assume that \eqref{assumption} holds, and let $\mathcal{L}:= \mathcal{L}_1+2\mathcal{L}_0$.
Then, ${\rm Ker}\, \mathcal{L}= {\rm Ker}\, \mathcal{L}_1 \cap {\rm Ker}\, \mathcal{L}_0\subset {\rm Ker}\, \mathcal{L}_0 \subset {\rm Sym}(n)$.
In particular, $\mathcal{L}:{\rm Skew}(n)\rightarrow {\rm Skew}(n)$ is an automorphism. That is, for any $Y\in {\rm Skew}(n)$, there exists a unique $X\in {\rm Skew}(n)$ with
$\mathcal{L}(X)=Y$.
\end{theorem}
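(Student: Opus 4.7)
The plan is to show that both $\mathcal{L}_0$ and $\mathcal{L}_1$ are self-adjoint and positive semi-definite with respect to the Frobenius inner product $\langle M,N\rangle_F:={\rm tr}(M^TN)$ on ${\bf R}^{n\times n}$; the kernel decomposition ${\rm Ker}\,\mathcal{L}={\rm Ker}\,\mathcal{L}_1\cap{\rm Ker}\,\mathcal{L}_0$ then follows immediately from $\mathcal{L}=\mathcal{L}_1+2\mathcal{L}_0$. After explicitly characterizing the two summand kernels, I would combine these descriptions with assumption \eqref{assumption} to prove that any skew-symmetric $X\in{\rm Ker}\,\mathcal{L}$ must vanish; equivalently, ${\rm Ker}\,\mathcal{L}\subset{\rm Sym}(n)$. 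Since both $\mathcal{L}_0$ and $\mathcal{L}_1$ commute with transposition, $\mathcal{L}$ preserves ${\rm Skew}(n)$, and a finite-dimensional injectivity argument then yields the automorphism assertion.

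A direct expansion gives $\langle\mathcal{L}_1(X),X\rangle_F=||Q^{1/2}X||_F^2+||XQ^{1/2}||_F^2\geq 0$ with $Q:=BB^T+C^TC$, so ${\rm Ker}\,\mathcal{L}_1=\{X\,|\,QX=XQ=0\}$, which is equivalent to $X|_{W^\perp}=0$ together with ${\rm Im}\,X\subset W$, where $W:={\rm Ker}\,Q={\rm Ker}\,B^T\cap{\rm Ker}\,C$ (using $v^TQv=||B^Tv||_2^2+||Cv||_2^2$). For $\mathcal{L}_0$, writing $A=U\Lambda U^T$ with eigenvalues $\lambda_1,\ldots,\lambda_n>0$ and $Y:=U^TXU$ leads to $\langle\mathcal{L}_0(X),X\rangle_F=\sum_{i,j}\frac{(\lambda_i-\lambda_j)^2}{\lambda_i\lambda_j}Y_{ij}^2\geq 0$, vanishing if and only if $Y$ is block-diagonal along the eigenvalue blocks of $\Lambda$, i.e., $X$ commutes with $A$.

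Now let $X\in{\rm Ker}\,\mathcal{L}\cap{\rm Skew}(n)$. Since $X$ commutes with the symmetric matrix $A$, it preserves each of the mutually orthogonal eigenspaces $V_\lambda:={\rm Ker}(\lambda I_n-A)$, and combined with ${\rm Im}\,X\subset W$ this yields $X(V_\lambda)\subset V_\lambda\cap W$. The decisive step exploits skew-symmetry: for any $v\in V_\lambda$ orthogonal (within $V_\lambda$) to $V_\lambda\cap W$ and any $u\in V_\lambda$, $(Xv,u)=-(v,Xu)=0$ because $Xu\in V_\lambda\cap W$, and since $Xv\in V_\lambda$ this forces $Xv=0$. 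Therefore $X|_{V_\lambda}$ is determined by its restriction to $V_\lambda\cap W$, a skew-symmetric endomorphism of a space of dimension at most one by assumption \eqref{assumption}, hence zero. As the $V_\lambda$ span ${\bf R}^n$, $X=0$.

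The main obstacle I expect is precisely the reduction above to a skew-symmetric endomorphism of $V_\lambda\cap W$: one must notice that skew-symmetry together with ${\rm Im}\,X\subset W$ forces vanishing on the orthogonal complement of $V_\lambda\cap W$ inside $V_\lambda$, which is what finally brings the dimension bound of \eqref{assumption} into play (the bound itself refers only to $V_\lambda\cap W$, not to $V_\lambda$). The remainder is bookkeeping: verifying $\mathcal{L}_j(X^T)=\mathcal{L}_j(X)^T$ for $j=0,1$, which implies $\mathcal{L}$ stabilizes ${\rm Skew}(n)$, and promoting the injectivity of $\mathcal{L}|_{{\rm Skew}(n)}$ to bijectivity by finite-dimensionality.
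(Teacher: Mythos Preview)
Your proposal is correct and follows essentially the same route as the paper's proof: both establish that $\mathcal{L}_0$ and $\mathcal{L}_1$ are self-adjoint positive-semidefinite (the paper via Kronecker representations, you via direct Frobenius computations), obtain ${\rm Ker}\,\mathcal{L}={\rm Ker}\,\mathcal{L}_0\cap{\rm Ker}\,\mathcal{L}_1$, and then argue block-wise on the eigenspaces of $A$. The only cosmetic difference is the endgame: the paper bounds ${\rm rank}\,S_j\leq 1$ and invokes the even-rank property of skew matrices, whereas you first show $X$ vanishes on the orthogonal complement of $V_\lambda\cap W$ inside $V_\lambda$ and then observe that a skew endomorphism of a space of dimension at most one is zero---these are equivalent ways of using assumption~\eqref{assumption}.
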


The proof is given in Appendix \ref{Proof_Thm2}.

\subsubsection{Riemannian gradient}

In numerical computations, we can use the horizontal lift
$\overline{{\rm grad}\, f_2}_{\Theta}$ as the Riemannian gradient at $[\Theta]\in N/O(n)$.
The horizontal lift belongs to the horizontal space $\mathcal{H}_{\Theta}$, and we have that
\begin{align}
\overline{{\rm grad}\, f_2}_{\Theta} = {\rm grad}\, f_1(\Theta), \label{grad2}
\end{align}
as shown in Section 3.6.2 in \cite{absil2008optimization}.
Thus, as the Riemannian gradient at $[\Theta]\in N/O(n)$, we can use ${\rm grad}\, f_1(\Theta)$, i.e., \eqref{grad1}.

\subsection{Geometry of Problem 3}

We have introduced Riemannian metric \eqref{metric_tildeM} into the manifold $\tilde{M}$.
Let $\bar{f_3}$ denote the extension of the objective function $f_3$ to the ambient Euclidean space ${\bf R}^{n\times n} \times {\bf R}^{n\times m} \times {\bf R}^{p\times n}$.
Then, the directional derivative of $\bar{f_3}$ at $\Theta\in \tilde{M}$ along $\xi=(\xi_A,\xi_B,\xi_C)\in T_{\Theta} \tilde{M}$ is given by
\begin{align}
{\rm D}\bar{f_3}(\Theta)[\xi] &=  {\rm tr}(\xi_A^{\top}\, G_A) + {\rm tr}(\xi_B^{\top} G_B) + {\rm tr}(\xi_C^{\top} G_C) \nonumber\\
&={\rm tr}(\xi_A\, {\rm diag}(G_A)) + {\rm tr}(\xi_B^{\top} G_B) + {\rm tr}(\xi_C^{\top} G_C), \label{Df3}
\end{align}
where $G_A$, $G_B$, and $G_C$ are defined by \eqref{G_A}, \eqref{G_B}, and \eqref{G_C}, respectively.
Here, we used the property that $\xi_A\in T_A {\rm Diag}_+(n)\cong {\rm Diag}(n)$.
Moreover, it follows from \eqref{metric_tildeM} and ${\rm D}f_3(\Theta)[\xi] = \langle {\rm grad}\,f_3(\Theta),\xi \rangle_{\Theta}$ that
\begin{align}
{\rm D}f_3(\Theta)[\xi] =& {\rm tr}((A^{-1})^2({\rm grad}\,f_3(\Theta))_A \xi_A) \nonumber \\
&+ {\rm tr}(\xi_B^{\top} ({\rm grad}\,f_3(\Theta))_B) + {\rm tr}(\xi_C^{\top} ({\rm grad}\,f_3(\Theta))_C). \label{Df4}
\end{align}
Because ${\rm D}f_3(\Theta)[\xi] = {\rm D}\bar{f}_3(\Theta)[\xi]$, \eqref{Df3} and \eqref{Df4} yield
\begin{align*}
{\rm grad}\,f_3(\Theta) = (A^2{\rm diag}(G_A), G_B, G_C).
\end{align*}

\section{Optimization algorithms for solving Problems 1, 2, and 3} \label{Sec4}

This section describes optimization algorithms for solving Problems 1,2, and 3,
and introduces a technique for choosing initial points in the algorithms.


\subsection{Optimization algorithm for solving Problem 1} \label{sec4A}

Algorithm \ref{algorithm1} describes a Riemannian CG method for solving Problem 1.
Because the Riemannian metric on the manifold $M$ is defined by \eqref{metric_M}, the exponential map ${\rm Exp}$ on $M$ is given by
\begin{align}
& {\rm Exp}_{\Theta} (A', B', C') \nonumber \\
=& (A^{1/2} \exp (A^{-1/2} A' A^{-1/2})A^{1/2}, B+B', C+C') \nonumber \\
=& (A\exp (A^{-1}A'), B+B', C+C'), \label{exp_alg1}
\end{align}
and the parallel transport $\mathcal{P}$ is given by
\begin{align}
& \mathcal{P}_{\Theta_1,\Theta_2}(A',B',C')  \nonumber \\
=& ((A_2A_1^{-1})^{1/2}A' ((A_2A_1^{-1})^{1/2})^{\top}, B', C'), \label{PT}
\end{align}
where $\Theta_i = (A_i,B_i,C_i)\in M$ $(i=1,2)$,
as shown in \cite{sra2015conic}.
We choose $t_k$ in step 4 as the Armijo step size \cite{absil2008optimization}.
The parameter $\beta_{k+1}$ in step 5 is called the Dai--Yuan type parameter \cite{sato2016dai}.

Note the if we introduce Riemannian metric \eqref{metric_M2}
instead of \eqref{metric_M},
exponential mapping \eqref{exp_alg1} is replaced with
\begin{align*}
{\rm Exp}_{\Theta} (A', B', C')
= (A+A', B+B', C+C').
\end{align*}
Thus, ${\rm Exp}_{\Theta} (A', B', C')\not \in M$ for some $(A',B',C')\in T_{\Theta} M$ because $A+A'$ is not always positive-definite.
As a result, we have to carefully choose $(A',B',C')\in T_{\Theta} M$ unlike for Riemannian metric \eqref{metric_M}.

The computational complexity of calculating the gradient ${\rm grad}\,f_1(\Theta)$ is higher than that of the other steps in Algorithm \ref{algorithm1}.
To estimate the complexity, we examine the complexities of $G_A$, $G_B$ and $G_C$.
To this end, we note that $G_A$ in \eqref{G_A} can be rewritten as
\begin{align*}
G_A = -2\sum_{i=0}^{K-1}\sum_{k=i+1}^K A^{k-i-1}C^{\top} (y_k-\hat{y}_k(\Theta)) \hat{x}_i^{\top}.
\end{align*}
Thus, we can recursively calculate $G_A$ as
\begin{align}
G_A(i+1) = G_A(i) -2 \gamma(i)\hat{x}^{\top}_{K-(i+1)}, \label{GA_efficient}
\end{align}
where
\begin{align*}
G_A(0) &= 0, \\
\gamma(i) &= C^{\top}(y_{K-i}-\hat{y}_{K-i}(\Theta)) + A\gamma(i-1), \\
\gamma(0) &= C^{\top}(y_K-\hat{y}_K(\Theta)).
\end{align*}
In fact, $G_A(K)=G_A$.
If $p<n$, i.e., the number of outputs is less than that of states, the computational complexity of $\gamma(i)\hat{x}^{\top}_{K-(i+1)}$ for each $i\in\{0,1,\ldots,K-1\}$ in \eqref{GA_efficient} is $\mathcal{O}(n^2)$,
because that of $\gamma(i)$ for each $i\in\{0,1,\ldots,K-1\}$ is $\mathcal{O}(n^2)$.
Thus, the complexity of $G_A$ is $\mathcal{O}(Kn^2)$.
Similarly, if $m<n$ and $p<n$, then the complexity of $G_B$ is $\mathcal{O}(Kn^2)$.
Moreover, if $p<n$, \eqref{G_C} implies that the complexity of $G_C$ is $\mathcal{O}(Kn^2)$. 
Hence, if $p,m<n<K$, \eqref{grad1} implies that the complexity of ${\rm grad}\,f_1(\Theta)$ is $\mathcal{O}(Kn^2)$.

\begin{algorithm}                      
\caption{Optimization algorithm for solving Problem 1.}         
\label{algorithm1}                          
\begin{algorithmic}[1]
\STATE Set input/output data $\{ (u_0,y_0), (u_1,y_1),\ldots, (u_K,y_K)\}$, the state dimension $n$, and an initial point $\Theta_0:=(A_0, B_0, C_0) \in M$.
\STATE Set $\eta_0 = -{\rm grad}\, f_1(\Theta_0)$ using \eqref{grad1}.
\FOR{$k=0,1,2,\ldots$ }
\STATE Compute a step size $t_k>0$, and set 
\begin{align}
 \Theta_{k+1}={\rm Exp}_{\Theta_k} (t_k \eta_{k}). \label{update_Alg1}
\end{align}
\STATE
Set
\begin{align*}
\beta_{k+1} = \frac{||g_{k+1}||_{k+1}^2}{ \langle g_{k+1},\mathcal{P}_{\Theta_k, \Theta_{k+1}}(\eta_k) \rangle_{k+1} -\langle g_k, \eta_k\rangle_k},
\end{align*}
where $g_k:= {\rm grad}\, f_1(\Theta_k)$, and $||\cdot||_k$ and $\langle \cdot, \cdot \rangle_k$ denote the norm and the inner product in the tangent space $T_{\Theta_k} M$, respectively.
\STATE Set
\begin{align}
\eta_{k+1} = -g_{k+1} + \beta_{k+1} \mathcal{P}_{\Theta_k, \Theta_{k+1}} (\eta_k). \label{eta_Alg1}
\end{align}
\ENDFOR
\end{algorithmic}
\end{algorithm}

\subsection{Optimization algorithm for solving Problem 2} \label{Sec4B}

In numerically solving Problem 2, we regard the manifold $M$ as $N$, because Proposition 3.3.12 in \cite{sontag1998mathematical} implies that the manifold $N$ is a dense set in the manifold $M$.
The proposed CG-based method for solving Problem 2 is obtained by replacing
the parallel transport $\mathcal{P}_{\Theta_k, \Theta_{k+1}}$ in Algorithm \ref{algorithm1} with  the orthogonal projection $P_{\Theta_{k+1}}$ given by \eqref{OP} onto the horizontal space $\mathcal{H}_{\Theta_{k+1}}$.
Note that the orthogonal projection $P_{\Theta_{k+1}}$ defines a vector transport on the quotient manifold $N/O(n)$ \cite{absil2008optimization}.


\subsection{Optimization algorithm for solving Problem 3}

The Riemannian CG method for solving Problem 3 is the same as Algorithm 1, except for the following:
\begin{itemize}
\item Replace $M$ with $\tilde{M}$.
\item Replace ${\rm grad}\, f_1(\Theta_k)$ with ${\rm grad}\, f_3(\Theta_k)$.
\end{itemize}
However, the computational complexity is lower than in the case of Problem 1.
This is because the matrices $A_k$ $(k=0,1,\ldots)$ in Algorithm 1 are diagonal when solving Problem 3, unlike for Problem 1.

\subsection{Initial points in Algorithm 1} \label{Sec4D}

To select an initial point $\Theta_0$ in Algorithm 1 for solving Problems 1 and 2, we propose Algorithm \ref{algorithm2} in which
${\rm rand}$ denotes a single uniformly distributed random number in the interval $(0,1)$.
In step 1, we obtain a triplet $(A,B,C)$ using
an existing subspace method such as N4SID \cite{van1994n4sid}, MOESP \cite{verhaegen1992subspace}, CVA \cite{larimore1990canonical}, ORT \cite{katayama2006subspace}, or N2SID \cite{verhaegen2016n2sid}.
However, at this stage, $A$ is not necessarily contained in ${\rm Sym}(n)$, to say nothing of ${\rm Sym}_+(n)$.
Thus, in step 2, we replace $A$ with the symmetric part of $A$.
That is, at this stage, $A\in {\rm Sym}(n)$, but $A\not\in {\rm Sym}_+(n)$ in general.
In fact, if there is $i\in \{1,2,\ldots, n\}$ such that $\lambda_i\leq 0$ in step 3, $A\not\in {\rm Sym}_+(n)$.
In steps $4, 5, 6, 7, 8$, any negative eigenvalues of $A$ are replaced with a random value in $(0,0.01)$.
That is, we consider negative eigenvalues of $A$ to be perturbed small positive eigenvalues.
Thus, steps 9 and 10 produce $A\in {\rm Sym}_+(n)$ and $\Theta_0\in M$, respectively.

For Problem 3, we replace step 9 in Algorithm \ref{algorithm2} with
\begin{align*}
A\leftarrow {\rm diag}(\lambda_1,\lambda_2,\ldots, \lambda_n),\,\,B\leftarrow V^{\top}B,\,\, C\leftarrow CV,
\end{align*}
where $V:=\begin{pmatrix}
v_1 & v_2 & \cdots & v_n
\end{pmatrix}$.
Then, $\Theta_0$ in step 10 is contained in $\tilde{M}$.

\begin{algorithm}                      
\caption{Constructing the initial point $\Theta_0\in M$.}         
\label{algorithm2}                          
\begin{algorithmic}[1]
\STATE Set $(A,B,C)$ using a subspace method.
\STATE $A\leftarrow {\rm sym}(A)$.
\STATE Let $A=\sum_{i=1}^n \lambda_i v_iv_i^{\top}$ be the eigenvalue decomposition. That is, $\lambda_i$ is an eigenvalue of $A$, and $v_i$ is the associated eigenvector.
\FOR{$i=1,2,\ldots, n$}
\IF{ $\lambda_{i} \leq 0$ }
\STATE $\lambda_i = 0.01\times {\rm rand}$.
\ENDIF
\ENDFOR
\STATE $A\leftarrow \sum_{i=1}^n \lambda_i v_iv_i^{\top}$.
\STATE $\Theta_0:=(A,B,C)$.
\end{algorithmic}
\end{algorithm}

\begin{remark} \label{remark2}
A Riemannian steepest descent (SD) method for solving Problems 1 and 2 can be derived by replacing steps 5 and 6 in Algorithm 1 with
$\eta_{k+1} = -{\rm grad}\,f_1(\Theta_{k+1})$.
That is, in contrast to the case of the CG methods, the SD method for Problem 2 is the same as that for Problem 1, because \eqref{grad2} holds.
However, the SD method is not more efficient than CG methods
\cite{ring2012optimization}.
We demonstrate this fact in Section \ref{Sec6A}.
\end{remark}


\section{GN method for solving the proposed problems} \label{Sec5}

The Gauss--Newton (GN) method has been widely used for solving least-squares problems.
Before comparing our proposed methods with the GN method, this section summarizes the GN method.

In the GN method \cite{mckelvey2004data, wills2008gradient},
we often use the vector parameter 
\begin{align}
\theta := \begin{pmatrix}
{\rm vec}_{\rm sym} (A) \\
{\rm vec}(B) \\
{\rm vec}(C)
\end{pmatrix}\in {\bf R}^{n_{\theta}} \label{full_parameter}
\end{align}
with 
$n_{\theta}:=n\left(\frac{n+1}{2} +m+p\right)$,
where ${\rm vec}$ denotes the usual vec-operator, i.e., ${\rm vec}(A)\in {\bf R}^{n^2}$ is obtained by stacking the columns of $A\in {\bf R}^{n\times n}$, and
for a symmetric matrix $A\in {\rm Sym}(n)$,
${\rm vec}_{\rm sym}(A)$ denotes the $\frac{1}{2}n(n+1)$-vector that is obtained from ${\rm vec}(A)$ by eliminating the redundant elements.
For example, if $A\in {\rm Sym}(3)$,
\begin{align*}
& {\rm vec}(A) \\
=& \begin{pmatrix}
a_{11} & a_{21} & a_{31} & a_{12} & a_{22} & a_{32} & a_{13} & a_{23} & a_{33}
\end{pmatrix}^{\top},\\
& {\rm vec}_{\rm sym}(A) \\
=& \begin{pmatrix}
a_{11} & a_{21} & a_{31} & a_{22} & a_{32} & a_{33}
\end{pmatrix}^{\top}.
\end{align*}

The parameter $\theta$ defined by \eqref{full_parameter} is a global coordinate system for the manifold $M$, and thus
we regard $\Theta$ on $M$ as $\theta$.
Hence, we write the prediction error vector $e(\Theta)$ defined by \eqref{pe_vector} as $e(\theta)$
and the objective function $f_1(\Theta)$ as
$V(\theta):= ||e(\theta)||_2^2$.
The aim of the GN method is to minimize $V(\theta)$.

The update formula of the GN method is given by
\begin{align}
\theta_{k+1} = \theta_k + t_k \Delta \theta_k, \label{GN_update}
\end{align}
where $t_k>0$ is a step size, and $\Delta \theta_k$ satisfies
\begin{align}
J(\theta_k)^{\top}J(\theta_k)\Delta \theta_k = -J(\theta_k)^{\top}e(\theta_k) \label{GN_equation}
\end{align}
with
\begin{align}
J(\theta) := \frac{\partial e}{\partial \theta}(\theta) \in {\bf R}^{pK\times n_{\theta}}. \label{J_def}
\end{align}
Here, 
\begin{align*}
\frac{\partial e}{\partial \theta_i}(\theta) = -\begin{pmatrix}
\left(\frac{\partial \hat{y}_1}{\partial \theta_i}(\theta)\right)^{\top} &
\left(\frac{\partial \hat{y}_2}{\partial \theta_i}(\theta)\right)^{\top} &
\cdots &
\left(\frac{\partial \hat{y}_K}{\partial \theta_i}(\theta)\right)^{\top}
\end{pmatrix}^{\top},
\end{align*}
and
\begin{align*}
\begin{cases}
\frac{\partial \hat{y}_j}{\partial \theta_i}(\theta) = \frac{\partial C}{\partial \theta_i}\hat{x}_j + C\frac{\partial \hat{x}_j}{\partial \theta_i}, \\
\frac{\partial \hat{x}_j}{\partial \theta_i} = \frac{\partial A}{\partial \theta_i}\hat{x}_{j-1} + A\frac{\partial \hat{x}_{j-1}}{\partial \theta_i} + \frac{\partial B}{\partial \theta_i}u_{j-1}
\end{cases}
\end{align*}
with
$\frac{\partial \hat{x}_0}{\partial \theta_i} = 0$.
Note that if $\theta_k \in N\subset M$ and the step size $t_k$ is sufficiently small, then $\Delta \theta_k$ can be regarded as an element of $T_{\theta_k} N$. 
In this case,
\begin{align}
e(\theta_k + t_k\Delta \theta_k) \approx e(\theta_k) + t_kJ(\theta_k)\Delta \theta_k, \label{e_approx}
\end{align}
and
\begin{align}
J(\theta_k): T_{\theta_k}N \rightarrow {\bf R}^{pK}. \label{J_map}
\end{align}

Note that \eqref{full_parameter} is an overparameterization.
This means that different $\theta$ may have the equivalent input-output properties.
In fact, from Section \ref{Sec3C}, each element $(A,B,C)$ of $\pi^{-1}([\Theta])\subset N$, which can be regarded as different $\theta$, has the same input-output properties, where
the dimension of $\pi^{-1}([\Theta])$ is
\begin{align}
\dim \pi^{-1}([\Theta]) = \dim N - \dim N/O(n) = \frac{n(n-1)}{2}. \label{dim_pi}
\end{align}
Eq.\,\eqref{dim_pi} follows from
Proposition 1 in Appendix \ref{ape_quotient} and Proposition 3.4.4 in \cite{absil2008optimization}.
Hence, if $\Delta \theta_k\in T_{\theta_k} \pi^{-1}([\Theta_k]) \subset T_{\theta_k} N$ under the identification of $\Theta_k$ and $\theta_k$,
it follows from \eqref{e_approx} that $J(\theta_k)\Delta \theta_k=0$, that is, $\Delta \theta_k \in {\rm Ker}\, J(\theta_k)$.
Therefore, 
\begin{align*}
T_{\theta_k}\pi^{-1}([\Theta_k]) \subset {\rm Ker}\, J(\theta_k),
\end{align*}
and \eqref{dim_pi} yields
\begin{align}
\dim {\rm Ker}\, J(\theta_k)\geq \frac{n(n-1)}{2}. \label{Ker_J}
\end{align}
It follows from \eqref{Ker_J} that the matrix $J(\theta_k)$ is rank-deficient, and
thus there are infinitely many solutions to \eqref{GN_equation}.
In the data-driven local coordinates (DDLC) introduced in \cite{mckelvey2004data}, $\Delta \theta_k$ is chosen as
\begin{align}
\Delta \theta_k = -V_1S_1^{-1}U_1^{\top} e(\theta_k) \label{DDLC}
\end{align}
as shown in \cite{wills2008gradient},
where $U_1S_1V_1^{\top}$ is the truncated singular value decomposition of $J(\theta_k)$, and $S_1\in {\bf R}^{n_{\theta}\times n_{\theta}}$ is a diagonal matrix.

Note that update formula \eqref{GN_update} preserves the symmetry of $A$, but does not, in general, preserve the positive-definiteness.
More precisely, if $\theta_k$ is contained in the manifold $M$ or $N$,
$\theta_{k+1}$ given by \eqref{GN_update} is also contained in $M$ or $N$ by choosing sufficiently small $t_k>0$.
This is because $M$ and $N$ are open sets.
However, if $t_k>0$ is too small, the objective function $V$ does not change very much.
Thus, we need to choose sufficiently large $t_k>0$, but then $\theta_{k+1}$ may not be contained in $M$ nor $N$, as demonstrated in Section \ref{Sec6}.
That is, it is difficult to determine an appropriate step size $t_k$ for some examples.

Instead of using the full parameterization $\theta\in {\bf R}^{n_\theta}$ defined by \eqref{full_parameter},
we can use canonical forms of linear systems, which reduce the number of free parameters.
However, canonical forms may lead to numerically ill-conditioned problems due to noise, as pointed out in Sections 1 and 4 in \cite{mckelvey2004data}.
Moreover, the results of Section \ref{Sec6B1} justify this point, because Problem 3 can be regarded as a case of using a canonical form.
Thus, in practice, the use of canonical forms for system identification may not be adequate.

To resolve the numerically ill-conditioned problem,
the use of overlapping parameterization has been proposed in \cite{van1982line}, and the authors in \cite{hanzon1998overlapping} introduced block-balanced input normal forms, which are overlapping parameterizations.
However, this approach requires monitoring the condition of the parametrization and switching to a new structure if the current structure is bad.
That is, this needs a number of extra calculations, which are not necessary in the case of Algorithm 1 based on Riemannian optimization.

\begin{remark}
Although the Jacobian $J(\theta)$ is rank-deficient in our case,
if $J(\theta)$ is of full-rank, update formula \eqref{GN_update} for the GN method can be regarded as a 
Riemannian steepest descent method for the specific choice of the 
Riemannian metric
\begin{align}
g^{{\rm GN}}_{\theta}(\dot{\theta}_1,\dot{\theta}_2) := \dot{\theta}_1^{\top} R(\theta) \dot{\theta}_2 \label{GN_metric1}
\end{align}
with
$R(\theta):= 2J(\theta)^{\top} J(\theta)$
into ${\bf R}^{n_{\theta}}$, as stated in \cite{hanzon1993riemannian, peeters1994system}.
The Riemannian gradient of the objective function $V$ at $\theta\in {\bf R}^{n_{\theta}}$ is given by
$R(\theta)^{-1}\frac{\partial V}{\partial \theta}(\theta)$,
where the gradient is called the natural gradient in the Riemannian manifold ${\bf R}^{n_{\theta}}$ endowed with the Riemannian metric \eqref{GN_metric1} \cite{amari1998natural}.
Using the natural gradient, update formula \eqref{GN_update} can be expressed as
\begin{align}
\theta_{k+1} = \theta_k -R(\theta_k)^{-1}\frac{\partial V}{\partial \theta}(\theta_k). \label{GN_update2}
\end{align}
\end{remark}

\section{Numerical Simulations} \label{Sec6}

In this section, we demonstrate the effectiveness of the proposed method.
To this end, we evaluate the identified systems using various indices, in addition to the value of the objective function in Problems 1, 2, and 3,
to prevent overfitting to noisy data.
Note that, in the simulations, we used MOESP \cite{verhaegen1992subspace} as the subspace method for step 1 in Algorithm \ref{algorithm2}.
This was implemented in the {\it system identification toolbox} of MATLAB.
Hence, we can easily implement Algorithm 2.

We consider identification problems of the RC electrical network system \cite{dorfler2018electrical, vandenberghe1997optimal} represented as
the undirected graph $\mathcal{G}=\{ \{1,2,\ldots, n\}, \mathcal{E} \}$, which is composed of $n$ nodes and the set $\mathcal{E}$ of $k$ undirected edges.
A mathematical model of the system is described by
\begin{align}
\begin{cases}
C_{\rm cap}\dot{V}(t) = -(\mathcal{L}_{{\rm res}} + G_{\rm con})V(t) + \tilde{G} u(t), \\
y(t) = \tilde{H}V(t).
\end{cases} \label{true}
\end{align}

\noindent
Table \ref{table_parameter} explains the parameters of system \eqref{true}.
Here, $\mathcal{L}_{\rm res}:= \mathcal{B}R_{\rm res}^{-1}\mathcal{B}^{\top}$ is a symmetric positive semi-definite matrix, 
 $\mathcal{B}\in {\bf R}^{n\times k}$ is the incidence matrix of $\mathcal{G}$, and
 $R_{\rm res}\in {\rm Diag}_+(n)$ is the resistance matrix.
The incidence matrix $\mathcal{B}=(\mathcal{B}_{ij})\in {\bf R}^{n\times k}$ is defined by
\begin{align*}
\mathcal{B}_{ij} := \begin{cases}
1,\quad\,\,\,\,\, {\rm if}\,\, i\,\, {\rm is\,\, the\,\, source\,\, node\,\, of\,\, edge}\,\, j \\
-1,\quad {\rm if}\,\, i\,\, {\rm is\,\, the\,\, sink\,\, node\,\, of\,\, edge}\,\, j \\
0,\quad\,\,\,\,\, {\rm otherwise}.
\end{cases}
\end{align*}
System \eqref{true} can be transformed into \eqref{1}
by defining $x(t):=C_{\rm cap}^{1/2}V(t)$:
\begin{align}
\begin{cases}
\dot{x}(t) = Fx(t)+Gu(t),\\
y(t) = Hx(t)
\end{cases} \label{numerical_ex}
\end{align} 
with
\begin{align*}
F &:=  -C_{\rm cap}^{-1/2}(\mathcal{L}_{{\rm res}} + G_{\rm con})C_{\rm cap}^{-1/2}\in {\bf R}^{n\times n},\\
G &:= C_{\rm cap}^{-1/2}\tilde{G}\in {\bf R}^{n\times m},\\
H &:= \tilde{H}C_{\rm cap}^{1/2} \in {\bf R}^{p\times n}.
\end{align*}
Note that the matrix $-F$ is contained in ${\rm Sym}_+(n)$, and thus $F$ is stable.
That is, all the eigenvalues of $F$ are negative.

\begin{table}[t]
\caption{Parameters of system \eqref{true}.} \label{table_parameter}
  \begin{center}
    \begin{tabular}{c|c} \hline
       $V(t)\in {\bf R}^n$    &  node-voltage vector    \\ \hline
$u(t)\in {\bf R}^m$ & voltage-source vector \\ \hline  
$y(t)\in {\bf R}^p$ &  measurement output vector \\ \hline
$C_{\rm cap}\in {\rm Sym}_+(n)$ &  capacitance matrix \\ \hline
$G_{\rm con}\in {\rm Sym}_+(n)$ &  conductance matrix \\ \hline
$\mathcal{L}_{\rm res}\in {\rm Sym}(n)$ & Laplacian matrix associated with $\mathcal{G}$ \\ \hline
$\tilde{G}\in {\bf R}^{n\times m}$ and $\tilde{H}\in {\bf R}^{p\times n}$  & constant matrices \\ \hline
   \end{tabular}
  \end{center}
\end{table}

Although we consider mathematical model \eqref{numerical_ex} to be noise-free,
measurement noise is inevitable in practice, as explained in Section 4.3 in \cite{ljung1999system}.
Thus, we assume that the true system is given by
\begin{align}
\begin{cases}
x_{k+1} = Ax_k+Bu_k,\\
y_k = Cx_k + v_k,
\end{cases} \label{numerical_ex2}
\end{align}
where $A$, $B$, and $C$ are defined by \eqref{3}, \eqref{4}, and \eqref{5}, respectively, and $v_k\in {\bf R}^p$ is measurement noise.
That is,
 the input/output data $(u_k,y_k)$ is generated by \eqref{numerical_ex2}.
Because $F$ is stable, the matrix $A$ is also stable. 
That is, all eigenvalues of $A$ are in the interval $(0,1)$.
The signal-to-noise ratio of system \eqref{numerical_ex2} is defined as
\begin{align}
{\rm SNR} = 10\log_{10} \left( \frac{\sum_{k=0}^K ||y_k-v_k||^2_2}{\sum_{k=0}^K ||v_k||^2_2} \right). \label{snr}
\end{align}

In the following, we present the results of numerical simulations for SISO and MIMO cases.
For SISO cases, we illustrate a frequency response using the Bode plots.
For MIMO cases, the values of various indices are given, because 
Bode plots  of MIMO cases do not clarify the distance between the true and estimated systems.

To this end, we set $n=20$, and
generated the undirected graph $\mathcal{G}$ using the Watts and Strogatz model \cite{watts1998collective} with 20 nodes of mean degree 10  and rewiring probability 0.4.
Additionally, $C_{\rm cap}$, $R_{\rm res}$, and $G_{\rm con}$ were given by
\begin{align}
\begin{cases}
C_{\rm cap} = 10\times {\rm diag}({\rm rand}, {\rm rand},\ldots, {\rm rand}), \\
R_{\rm res} = 0.1\times {\rm diag}(1, 1, \ldots, 1), \\
G_{\rm con} = {\rm diag}({\rm rand}, {\rm rand}, \ldots, {\rm rand}),
\end{cases} \label{CRG}
\end{align}
where each ${\rm rand}$ denotes a uniformly distributed random number in the interval $(0,1)$.
Moreover, we generated each component of $u_k$ from the Gaussian random distribution with mean $0$ and variance 100, and
the components of $v_k$ from the Gaussian random distribution with mean 0 and variance $\sigma^2$.
The sampling interval $h$ was $0.1$.

We denote the results given by Algorithm 1 for solving Problems 1, 2, and 3 as ${\rm CG}_1$, ${\rm CG}_2$, and ${\rm CG}_3$, respectively.
Moreover, we write ${\rm SD}$ to denote the Riemannian SD method, as briefly explained in Remark \ref{remark2}.

\subsection{SISO case} \label{Sec6A}

First, we considered SISO cases with $m=p=1$.
The parameters $\tilde{G}$ and $\tilde{H}$ were given by
\begin{align*}
\tilde{G} = \begin{pmatrix}
1 \\
0 \\
\vdots \\
0
\end{pmatrix},\quad
\tilde{H} = \begin{pmatrix}
1 & 0 &\cdots & 0
\end{pmatrix}.
\end{align*}

\subsubsection{Identification by the GN method}

Fig.\,\ref{Fig1} illustrates the eigenvalues of the true matrix $A$ corresponding to $F$ of system \eqref{numerical_ex}, the estimated matrix $A$ produced by Algorithm 2,
and the estimated matrix $A$ provided by the prediction error method using the GN method with the update formula \eqref{GN_update} and \eqref{DDLC}, as explained in Section \ref{Sec5}, after 10 iterations.
Here, we used the result $(A,B,C)$ obtained by Algorithm 2 as the initial point of the  GN method, and the step sizes $t_k$ in \eqref{GN_update} were $t_k=10^{-9}$ for all $k\in \{1,2,\ldots, 10\}$.
According to Fig.\,\ref{Fig1}, the prediction error method using the GN method did not provide $\Theta\in M$.
In fact, some eigenvalues of $A$ produced by the GN method took negative values, whereas all eigenvalues of the true matrix $A$ are positive. 
Moreover, we confirmed the following results:
\begin{itemize}
\item When $t_k=10^{-9}$, the positive-definite property of matrix $A$ produced by the GN method was lost after only a few iterations.
\item If we set $t_k>10^{-9}$, the symmetric matrices $A$ produced by the GN method, in many cases, were unstable after 10 iterations.
\item Even if we set $t_k<10^{-9}$, some eigenvalues of the symmetric matrices $A$ produced by the GN method were negative after 10 iterations.
\end{itemize}


Thus, the GN method described in Section \ref{Sec5} is not adequate for solving our problem.
Hence, we hereafter compare ${\rm CG}_1$, ${\rm CG}_2$, ${\rm CG}_3$,  {\rm SD}, and Algorithm 2.

\begin{figure}[t]
\begin{center}
\includegraphics[scale=0.44]{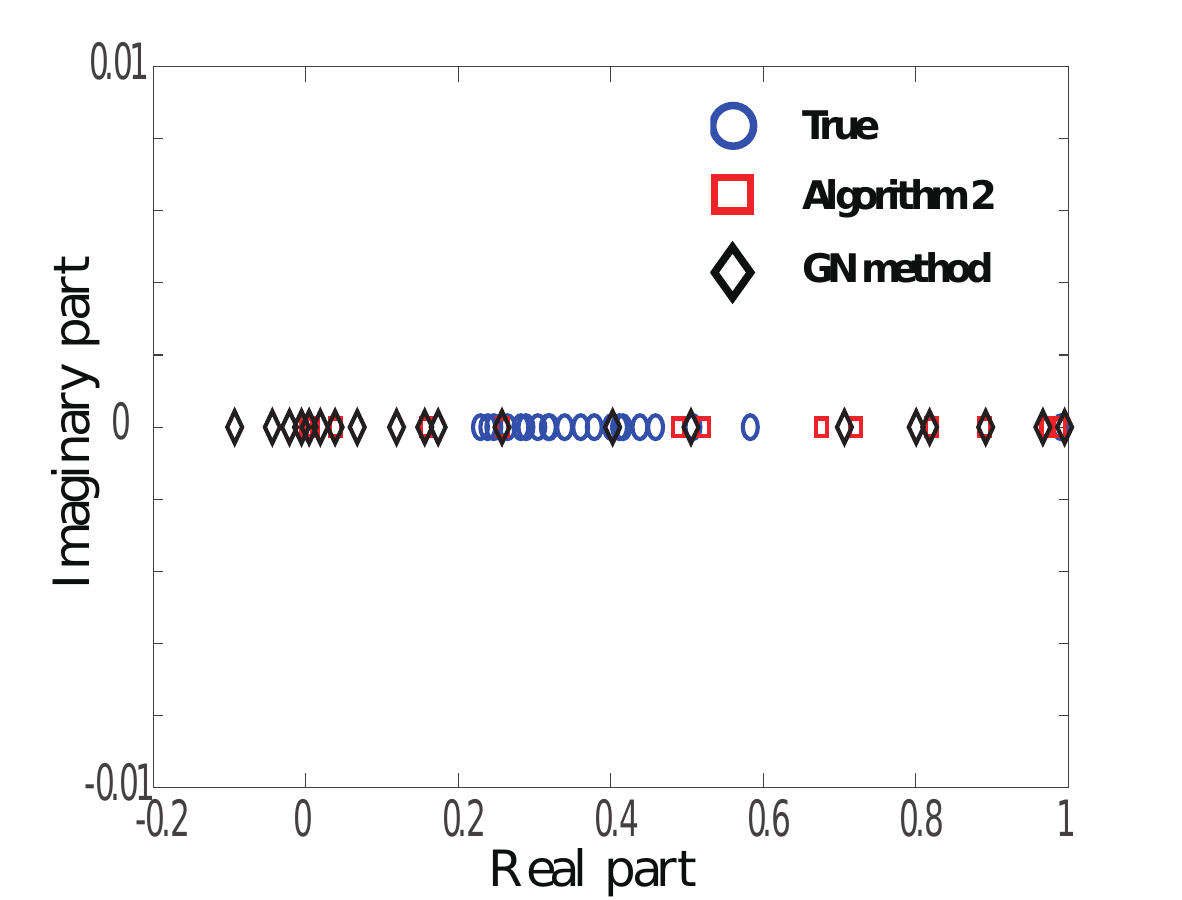}
\end{center}
\vspace{-1em}
\caption{Eigenvalues of $A$ in the original system, estimated system produced by Algorithm 2, and estimated system provided by the GN method after 10 iterations.} \label{Fig1}
\end{figure}


\subsubsection{Comparison of ${\rm CG}_1$, ${\rm CG}_2$, ${\rm CG}_3$, and {\rm SD}}

Figs.\,\ref{Fig2} and \ref{Fig3} illustrate a comparison of ${\rm CG}_1$, ${\rm CG}_2$, ${\rm CG}_3$, and SD with $K=600$, $\sigma^2=0.1$, and ${\rm SNR}=12.803$.
Here, $\Theta_0$ in Fig.\,\ref{Fig2} was obtained using Algorithm 2.
According to these figures, the results for ${\rm CG}_1$ and ${\rm CG}_2$ completely overlap,
and Fig.\,\ref{Fig2} demonstrates that the convergence speeds of ${\rm CG}_1$, ${\rm CG}_2$, and ${\rm CG}_3$ are superior to that of SD. 
Moreover, Fig.\,\ref{Fig3} shows that ${\rm CG}_1$, ${\rm CG}_2$, ${\rm CG}_3$, and {\rm SD} improve the frequency response of Algorithm 2.
In particular, the Bode plots of the estimated systems obtained by ${\rm CG}_1$ and ${\rm CG}_2$ are almost the same as that of the true system, unlike ${\rm CG}_3$, SD, and Algorithm 2.
Note that no destabilization occurred for ${\rm CG}_1$, ${\rm CG}_2$, or ${\rm CG}_3$.

\begin{figure}[t]
\begin{center}
\includegraphics[scale=0.44]{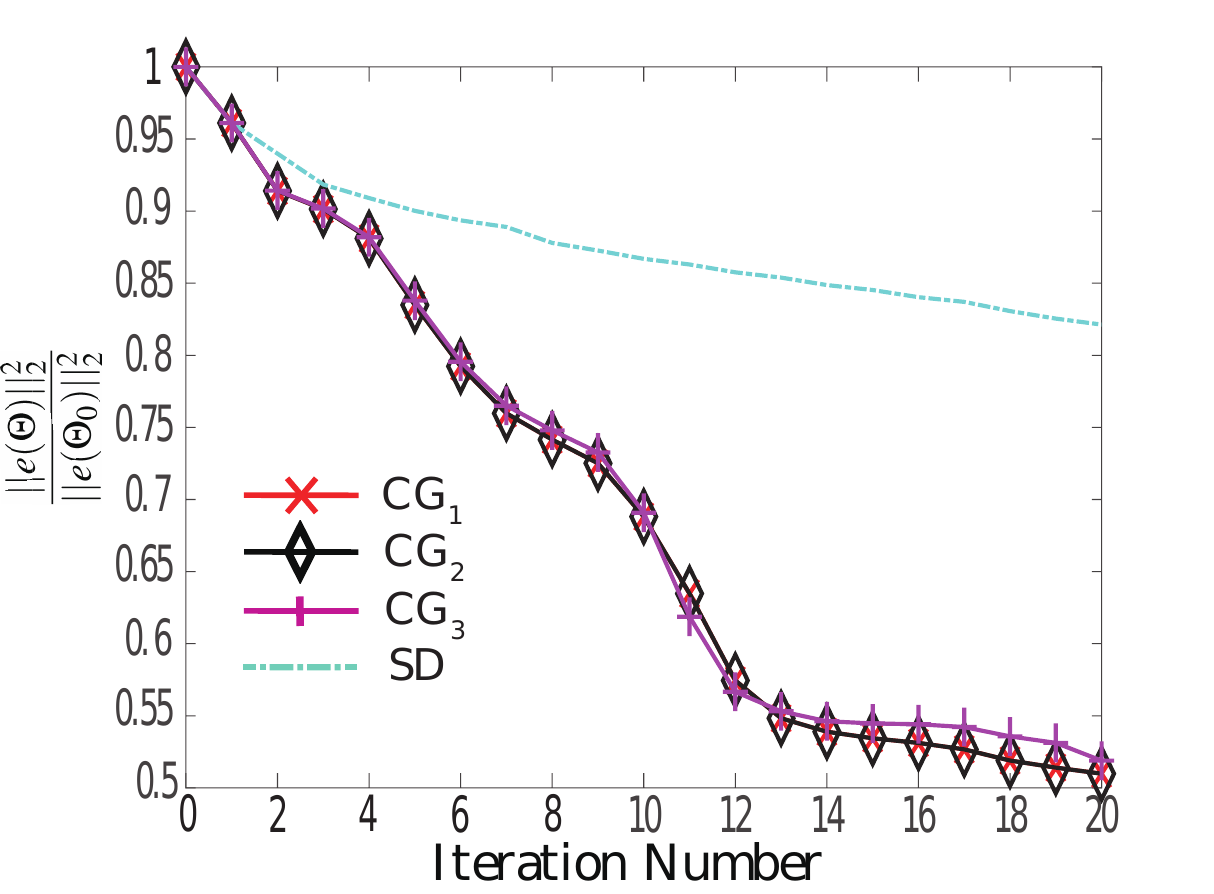}
\end{center}
\vspace{-1em}
\caption{Relative objective values obtained by ${\rm CG}_1$, ${\rm CG}_2$, ${\rm CG}_3$, and {\rm SD}.} \label{Fig2}
\begin{center}
\includegraphics[scale=0.44]{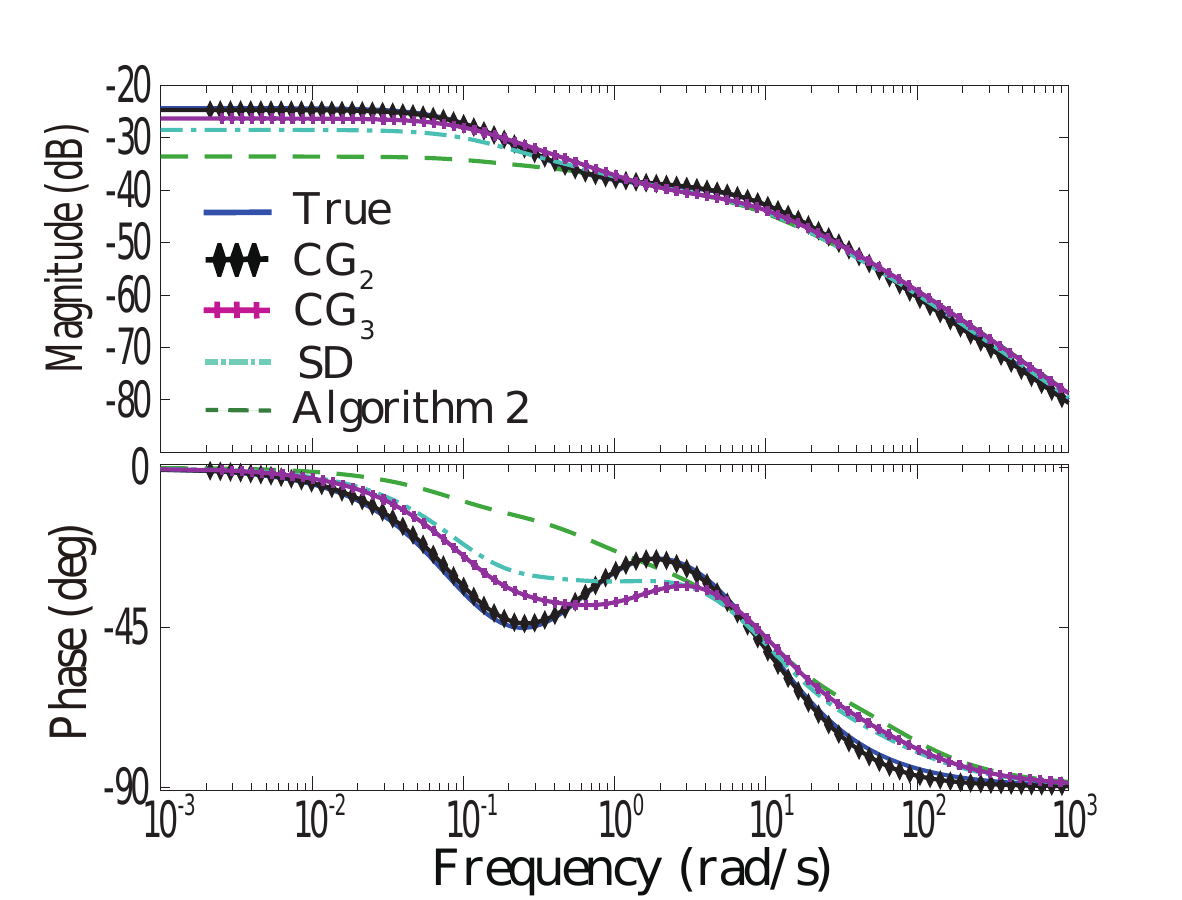}
\end{center}
\vspace{-1em}
\caption{Bode plots of true and estimated systems obtained by ${\rm CG}_1$, ${\rm CG}_2$, ${\rm CG}_3$, {\rm SD}, and Algorithm 2. Because the Bode plot of the estimated system obtained by ${\rm CG}_1$ completely overlapped with that obtained by ${\rm CG}_2$, the illustration of ${\rm CG}_1$ was omitted.} \label{Fig3}
\end{figure}

\subsection{MIMO case}

Next, we considered the MIMO case with $m=p=2$.
The parameters $\tilde{G}$ and $\tilde{H}$ were given by
\begin{align*}
\tilde{G} = \begin{pmatrix}
1 & 0 \\
0 & 1 \\
0 & 0 \\
\vdots & \vdots \\
0 & 0
\end{pmatrix},\quad
\tilde{H} = \begin{pmatrix}
1 & 0 & 0 & \cdots & 0\\
0 & 1 & 0 & \cdots & 0
\end{pmatrix}.
\end{align*}
As with the SISO case, 
the conventional GN method did not produce $A_{\rm est}\in {\rm Sym}_+(n)$ and the convergence speeds of ${\rm CG}_1$, ${\rm CG}_2$, and ${\rm CG}_3$ were faster than that of SD.
Thus, we present the results of comparisons among ${\rm CG}_1$, ${\rm CG}_2$, ${\rm CG}_3$, and Algorithm \ref{algorithm2}.

\subsubsection{Stability of the estimated matrices $A\in {\rm Sym}_+(n)$ produced by ${\rm CG}_1$, ${\rm CG}_2$, and ${\rm CG}_3$} \label{Sec6B1}

Because all of the matrices $A_{\rm est}$ (estimates of $A$ in true system \eqref{numerical_ex2}) produced by ${\rm CG}_1$, ${\rm CG}_2$, and ${\rm CG}_3$ are contained in ${\rm Sym}_+(n)$,
the eigenvalues of $A_{\rm est}$ are positive real numbers, unlike the eigenvalues given by the conventional GN method.
However, even if $A$ is stable, $A_{\rm est}$ may be unstable.

Thus, we compared the stability of the estimated matrices $A_{\rm est}$ provided by ${\rm CG}_1$, ${\rm CG}_2$, and ${\rm CG}_3$.
We performed numerical simulations 100 times with $\sigma^2=0.05$, $\sigma^2=0.1$, and $\sigma^2=0.5$.
Table \ref{table1} presents the number of unstable cases over 20 iterations when $K=400$.
According to Table \ref{table1}, the rate of instability in $A_{\rm est}$ produced by ${\rm CG}_3$ is far higher than when using ${\rm CG}_1$ or ${\rm CG}_2$. 
Because we used different $C_{\rm cap}$, $R_{\rm res}$, and $G_{\rm con}$ for each simulation, the {\rm SNR} defined by \eqref{snr} was also different.
Table \ref{table4} describes the relation between $\sigma^2$ and ${\rm SNR}$.
Here, ${\rm SNR}_{\rm ave}$ and ${\rm SNR}_{\rm dev}$ are the average and standard deviation over 10000 simulations, defined by
\begin{align*}
{\rm SNR}_{\rm ave} &:= \frac{ \sum_{i=1}^{10000}{\rm SNR}_i }{10000}, \\
{\rm SNR}_{\rm dev} &:= \sqrt{\frac{ \sum_{i=1}^{10000}({\rm SNR}_i-{\rm SNR}_{\rm ave})^2 }{10000} },
\end{align*}
where ${\rm SNR}_i$ denotes ${\rm SNR}$ in the $i$-th simulation.
According to Table \ref{table4}, ${\rm SNR}_{\rm ave}$ decreases as $\sigma^2$ increases, although the ${\rm SNR}_{\rm dev}$ values are similar.
We also obtained similar results to those described in Tables \ref{table1} and \ref{table4} for different values of $K$.
Hence, we conclude that the rate of instability in $A_{\rm est}$ produced by
${\rm CG}_3$ is far higher than those when using ${\rm CG}_1$ and ${\rm CG}_2$.
This is in contrast to the SISO case.
In addition, the instability rate for ${\rm CG}_1$ and ${\rm CG}_2$ is independent of SNR, unlike that for ${\rm CG}_3$.

The reason for the high instability rate produced by ${\rm CG}_3$ is that the noise component of the output directly influences
 the diagonal matrix $A_{\rm est}$, i.e.,
 eigenvalues of $A_{\rm est}$.
This is essentially the same phenomenon observed in system identification problems,
whereby canonical forms lead to numerically ill-conditioned problems \cite{mckelvey2004data}.
In contrast, the noise component does not have a significant effect on the eigenvalues of the estimated matrices $A_{\rm est}$ produced by ${\rm CG}_1$ and ${\rm CG}_2$,
because the matrices are not diagonal.

\begin{table}[t]
\caption{Number of unstable cases over $20$ iterations when $K=400$.} \label{table1}
  \begin{center}
    \begin{tabular}{c|ccc} \hline
          Algorithm                           &  $\sigma^2=0.05$ &   $\sigma^2=0.1$  & $\sigma^2=0.5$   \\ \hline \hline \vspace{3mm}
${\rm CG}_1$ & $3$  &  $2$ & $3$ \\ \hline  
${\rm CG}_2$ & $1$   & $1$ & $2$ \\ \hline
${\rm CG}_3$ & $15$  & $17$ & $26$ \\ \hline
   \end{tabular}
  \end{center}
\end{table}

\begin{table}[t]
\caption{${\rm SNR}_{\rm ave}$ and ${\rm SNR}_{\rm dev}$   when $K=400$.} \label{table4}
  \begin{center}
    \begin{tabular}{c|ccc} \hline
                                     &  $\sigma^2=0.05$ &   $\sigma^2=0.1$  & $\sigma^2=0.5$   \\ \hline \hline \vspace{3mm}
${\rm SNR}_{\rm ave}$ & 26.261  &  20.174 & 6.212 \\ \hline  
${\rm SNR}_{\rm dev}$ & 6.686   & 6.660 & 6.686 \\ \hline
   \end{tabular}
  \end{center}
\end{table}

\subsubsection{Evaluation of proposed methods} \label{Sec6B2}

We evaluated the results with respect to the cost function $||e(\Theta_{\rm est})||_2^2$, the relative $H^2$ and $H^{\infty}$ norms,
and the maximum eigenvalues $\lambda_{\rm max}(F_{\rm est})$ of the estimated matrix $F_{\rm est}$ of $F$.
Here, $\lambda_{\rm max}(F)$ was $-0.086$ in all cases. 
Note that the maximum eigenvalue $\lambda_{\rm max}(F_{\rm est})$ is important,
because the transient state $\hat{x}(t)$ in system \eqref{1} is dominated by $\lambda_{\rm max}(F)$ under $\hat{u}(t)=0$.
That is, if $\lambda_{\rm max}(F)$ and $\lambda_{\rm max}(F_{\rm est})$ are closer, we can expect the true and estimated transient states to be more similar. 
When we used our proposed methods ${\rm CG}_1$, ${\rm CG}_2$, and ${\rm CG}_3$, the number of iterations was set to 20.
Increasing the number of iterations would decrease the value of the objective function $||e(\Theta_{\rm est})||_2^2$.
However, other indices such as $g_2$, $g_{\infty}$, and $\lambda_{\rm max}(F_{\rm est})$ may become worse due to overfitting with noisy data.

To define the relative $H^2$ and $H^{\infty}$ norms, we use
$T$ and $T_{\rm est}$ as the transfer functions from the input $u$ to the output $y$ of the true and estimated systems, respectively.
That is, 
\begin{align*}
T(s) &:= C(sI_n-F)^{-1}G,\quad s\in {\bf C}, \\
T_{\rm est}(s) &:= C_{\rm est}(sI_n-F_{\rm est})^{-1}G_{\rm est},
\end{align*}
where $G_{\rm est}$ and $C_{\rm est}$ are the estimated matrices of  $G$ and $C$, respectively.
Here, we estimate the matrices $F_{\rm est}$ and $G_{\rm est}$ using \eqref{F} and \eqref{G}, respectively.
Using $T$ and $\hat{T}$, we define the following relative $H^2$ and $H^{\infty}$ norms:
\begin{align*}
g_2:= \frac{||T-T_{\rm est}||_{H^2}}{||T||_{H^2}}\quad {\rm and}\quad g_{\infty} := \frac{||T-T_{\rm est}||_{H^{\infty}}}{||T||_{H^{\infty}}}.
\end{align*}

Tables \ref{table_N200_2}, \ref{table_N400_2}, \ref{table_N600_2}, \ref{table_N800_2}, and \ref{table_N1000_2} present values of
$||e(\Theta_{\rm est})||_2^2$, $g_2$, $g_{\infty}$, and $\lambda_{\rm max}(F_{\rm est})$ for different $K$, as given by estimating $\Theta_{\rm est}$, $F_{\rm est}$, $G_{\rm est}$, and $C_{\rm est}$
using Algorithm 2, ${\rm CG}_1$, ${\rm CG}_2$, and a combined CG approach called Hybrid CG.
Hybrid CG is a combination of ${\rm CG}_1$ and ${\rm CG}_2$ obtained by applying
 ${\rm CG}_1$ for the first 15 iterations and ${\rm CG}_2$ for the next 5 iterations.
Note that in terms of the various indices,
we confirmed that Algorithm 2 provides a considerably better initial point $\Theta_0$ in Algorithm 1 than randomly choosing $\Theta_0\in M$.

According to Tables \ref{table_N200_2}, \ref{table_N400_2}, \ref{table_N600_2}, \ref{table_N800_2}, and \ref{table_N1000_2},
the results for
$||e(\Theta_{\rm est})||_2^2$, $g_2$, $g_{\infty}$, and $\lambda_{\rm max}(F_{\rm est})$ given by ${\rm CG}_1$ and ${\rm CG}_2$ are better than those given by Algorithm 2 for all $K$.
The results from ${\rm CG}_1$ and ${\rm CG}_2$ are almost the same for all $K$.
However, with the exception of $\lambda_{\rm max}(F_{\rm est})$,  the results from Hybrid CG are superior to those of ${\rm CG}_1$ and ${\rm CG}_2$.
Even when the combination of iterations was changed, the results of Hybrid CG were better than those of ${\rm CG}_1$ and ${\rm CG}_2$ in many cases.
Moreover, we should note that
the evaluation results may be worse as the data length $K$ increases.

\begin{table}[t]
\caption{Evaluation results when $K=200$ and ${\rm SNR}=15.043$.} \label{table_N200_2}
  \begin{center}
    \begin{tabular}{c|cccc} \hline
  $K=200$    &  $||e(\Theta_{\rm est})||^2_2$  & $g_2$  &   $g_{\infty}$  & $\lambda_{\rm max}(F_{\rm est})$    \\ \hline \hline \vspace{3mm}
Algorithm 2  & 23.097  & 0.660   & 0.868 &  $-0.138$ \\ \hline            
CG$_1$       & 4.675     &  0.288 & 0.328  & $-0.075$ \\ \hline  
CG$_2$       & 4.662     & 0.287  & 0.322   & $-0.075$\\ \hline
Hybrid CG   & 4.759     & 0.241      &  0.258  &  $-0.098$  \\ \hline
   \end{tabular}
  \end{center}
\caption{Evaluation results when $K=400$ and ${\rm SNR}=15.286$.} \label{table_N400_2}
  \begin{center}
    \begin{tabular}{c|cccc} \hline
  $K=400$    &  $||e(\Theta_{\rm est})||^2_2$  & $g_2$  &   $g_{\infty}$  & $\lambda_{\rm max}(F_{\rm est})$    \\ \hline \hline \vspace{3mm}
Algorithm 2  & 76.360  & 0.576    & 0.774 &  $-0.117$ \\ \hline            
CG$_1$       & 5.231      &  0.124 & 0.123  & $-0.064$ \\ \hline  
CG$_2$       & 5.226     & 0.124   & 0.122   & $-0.064$\\ \hline
Hybrid CG   & 4.900     & 0.111      &  0.087  &  $-0.065$  \\ \hline
   \end{tabular}
  \end{center}
\caption{Evaluation results when $K=600$ and ${\rm SNR}=15.669$.} \label{table_N600_2}
  \begin{center}
    \begin{tabular}{c|cccc} \hline
  $K=600$    &  $||e(\Theta_{\rm est})||^2_2$  & $g_2$  &   $g_{\infty}$  & $\lambda_{\rm max}(F_{\rm est})$    \\ \hline \hline \vspace{3mm}
Algorithm 2  & 75.850  & 0.624    & 0.835 &  $-0.258$ \\ \hline            
CG$_1$       & 7.028      &  0.144  & 0.218  & $-0.067$ \\ \hline  
CG$_2$       & 7.025     & 0.144  & 0.217   & $-0.067$\\ \hline
Hybrid CG   & 6.538     & 0.072      &  0.052  &  $-0.100$  \\ \hline
   \end{tabular}
  \end{center}
\caption{Evaluation results when $K=800$ and ${\rm SNR}=16.714$.} \label{table_N800_2}
  \begin{center}
    \begin{tabular}{c|cccc} \hline
  $K=800$    &  $||e(\Theta_{\rm est})||^2_2$  & $g_2$  &   $g_{\infty}$  & $\lambda_{\rm max}(F_{\rm est})$    \\ \hline \hline \vspace{3mm}
Algorithm 2  & 358.798  & 0.653  & 0.868 &  $-0.387$ \\ \hline            
CG$_1$       & 59.391      &  0.343  & 0.263  & $-0.115$ \\ \hline  
CG$_2$       & 59.000     & 0.343 & 0.265 & $-0.115$\\ \hline
Hybrid CG   & 20.623     & 0.191      &  0.211  &  $-0.129$  \\ \hline
   \end{tabular}
  \end{center}
\caption{Evaluation results when $K=1000$ and ${\rm SNR}=16.380$.} \label{table_N1000_2}
  \begin{center}
    \begin{tabular}{c|cccc} \hline
  $K=1000$    &  $||e(\Theta_{\rm est})||^2_2$  & $g_2$  &   $g_{\infty}$  & $\lambda_{\rm max}(F_{\rm est})$    \\ \hline \hline \vspace{3mm}
Algorithm 2  & 320.573  & 0.620   & 0.831 &  $-0.237$ \\ \hline            
CG$_1$       & 17.541      &  0.141  & 0.058  & $-0.082$ \\ \hline  
CG$_2$       & 17.464     & 0.141  & 0.057   & $-0.082$\\ \hline
Hybrid CG   & 12.865     & 0.094      &  0.055  &  $-0.093$  \\ \hline
   \end{tabular}
  \end{center}
\end{table}


\section{Conclusion and future work} \label{Sec7}
We developed identification methods for linear continuous-time symmetric systems using Riemannian optimization.
For this, we formulated three least-squares problems of minimizing the sum of squared errors on Riemannian manifolds,
and described the geometry of each problem.
In particular, we examined the quotient geometry in one problem in depth.
We proposed Riemannian CG methods for the three problems, and
 selected initial points using the modified MOESP method.
The results from a series of numerical simulations demonstrated the effectiveness of our proposed methods with comparisons to the traditional GN method.

The following problems should be addressed in future studies.
\begin{enumerate}
\item As mentioned in Remark \ref{remark_future}, system \eqref{1} does not correspond to a symmetric continuous-time system discussed in \cite{willems1976realization, tan2001stabilization}.
To identify such systems,
we need to develop a novel method that is fundamentally different from the methods proposed in this paper.

\item In Section \ref{Sec6B2}, we confirmed that the results produced by Hybrid CG,  a combination of ${\rm CG}_1$ and ${\rm CG}_2$, were better than those of ${\rm CG}_1$ and ${\rm CG}_2$ in many cases.
It would be interesting to study how the combination of iterations of ${\rm CG}_1$ and ${\rm CG}_2$ should be determined.

\item Lemma 2 in \cite{brockett1976some} shows that the manifold of transfer functions of SISO systems, i.e., $m=p=1$, is partitioned into multiple connected components. 
Thus, it is expected that $N/O(n)$ with $m=p=1$ will have multiple connected components, because each element in $N/O(n)$ corresponds to a transfer function.
If this is the case, different initial points on the different connected components will converge to different points, and thus initial points on $N/O(n)$ may
 considerably affect the system identification results.
In fact, we have confirmed that Algorithm 2 provides a better initial point than a random choice.
This provides a practical insight, and so it would be interesting to study whether or not the expectation is true.

\item In this paper, we proposed methods for identifying a target system as \eqref{1} with no noise.
As illustrated in Section \ref{Sec6}, our proposed methods are effective for identifying \eqref{1}, even if the output data was noisy.
However, if we were to consider the effect of noise on our methods, we may be able to derive better algorithms.
Thus, it is desirable to extend our proposed methods under the consideration of noise.

\item 
We developed an indirect method for identifying a continuous-time model \eqref{1}.
That is, we first identified the corresponding discrete-time model, and then this model was transformed into the required continuous-time model.
In contrast to our approach, the aim of a direct approach is to identify the original continuous-time model without using a discrete-time model.
However, as explained in \cite{garnier2008direct}, a direct approach has several drawbacks compared with an indirect approach.
Nevertheless, a direct approach is important for system identification, because 
it can provide physical insights into the continuous-time system to be identified.
Thus, it would be desirable to develop a direct approach for identifying a continuous-time model \eqref{1}.
\end{enumerate}


%


\section*{Acknowledgment}

This work was supported by JSPS KAKENHI Grant Numbers JP18K13773 and JP16K17647, and the Asahi Glass Foundation.

\appendix

\subsection{Geometry of the manifold ${\rm Sym}_+(n)$} \label{ape0}
We review the geometry of ${\rm Sym}_{+}(n)$ to develop optimization algorithms for solving our problems.
For a detailed explanation, see \cite{sato2018TAC, smith2005covariance}.

For $\xi_1$, $\xi_2\in T_S {\rm Sym}_{+}(n)$, we define Riemannian metric into ${\rm Sym}_+(n)$ as
\begin{align}
\langle \xi_1, \xi_2\rangle_S := {\rm tr} ( S^{-1} \xi_1 S^{-1} \xi_2  ). \label{metric}
\end{align}
Let $f: {\rm Sym}_+(n) \rightarrow {\bf R}$ be a smooth function and $\bar{f}$ be the extension of $f$ to Euclidean space ${\bf R}^{n\times n}$.
Riemannian gradient ${\rm grad}\, f(S)$ with respect to Riemannian metric \eqref{metric} is given by
\begin{align}
{\rm grad}\, f(S) = S {\rm sym}(\nabla \bar{f}(S)) S, \label{gradient}
\end{align}
where $\nabla \bar{f}(S)$ denotes the Euclidean gradient of $\bar{f}$ at $S\in {\rm Sym}_+(n)$.
The exponential map on ${\rm Sym}_+(n)$ is given by
\begin{align}
{\rm Exp}_S (\xi) &= S^{\frac{1}{2}} \exp (S^{-\frac{1}{2}} \xi S^{-\frac{1}{2}} ) S^{\frac{1}{2}} \nonumber\\
&= S\exp (S^{-1}\xi),\label{8}
\end{align}
where $\exp$ is the matrix exponential function.

We note that Riemannian metric \eqref{metric} is essentially the same with Fisher information metric
\begin{align*}
g_{S}^{\rm FIM} := {\rm E}({\rm D}l_x(S)\otimes {\rm D}l_x(S)), 
\end{align*}
where 
\begin{align*}
l_x(S):= \log p(x|S^{-1}),
\end{align*}
${\rm E}$ is the expectation operator with respect to $p(x|S^{-1})$,
and $\otimes$ is the tensor product.
Here, $p(x|S^{-1})$ denotes the Gaussian distribution with zero mean vector and  covariance $S^{-1}\in {\rm Sym}_+(n)$, that is,
\begin{align*}
p(x|S^{-1}) = \sqrt{\frac{{\rm det}\,S}{(2\pi)^n}} \exp \left( -\frac{1}{2} x^{\top} S x\right).
\end{align*}
Thus, $l_x(S)$ is the log-likelihood function of $p(x|S^{-1})$, and
\begin{align}
l_x(S) = -\frac{n}{2} \log (2\pi) + \frac{1}{2} \log \det S - \frac{1}{2}x^{\top}Sx. \label{loglike}
\end{align}
To see the relation between \eqref{metric} and $g_S^{\rm FIM}$, we use
\begin{align}
g_S^{\rm FIM} = -{\rm E}({\rm D}^2\,l_x(S)), \label{Fisher_metric}
\end{align}
where ${\rm D}^2\,l_x(S):{\rm Sym}(n)\times {\rm Sym}(n)\rightarrow {\bf R}$ is the second derivative of $l_x$ at $S$.
Eq.\,\eqref{Fisher_metric} can be found in Theorem 1 in \cite{smith2005covariance}.
The directional derivative of $l_x:{\rm Sym}(n)\rightarrow {\bf R}$ at $S\in {\rm Sym}_+(n)$ along $\xi\in T_S {\rm Sym}_+(n)\cong {\rm Sym}(n)$ is given by
\begin{align}
{\rm D}l_x(S)[\xi] = \frac{1}{2}{\rm tr}(S^{-1}\xi) -\frac{1}{2}{\rm tr}(xx^{\top}\xi), \label{42}
\end{align}
where the first term of the right-hand side is obtained by using Jacobi's formula ${\rm D}\det S [\xi] = {\rm tr}({\rm det}(S)S^{-1} \xi)$.
We define the inner product of ${\rm Sym}(n)$ as ${\rm tr}(\xi_1\xi_2)$ for any $\xi_1,\xi_2\in {\rm Sym}(n)$.
Then, from \eqref{42}, the gradient of $l_x$ at $S\in {\rm Sym}_+(n)$ is provided as
\begin{align*}
\nabla l_x(S) = \frac{1}{2}(S^{-1}-xx^{\top}).
\end{align*}
Moreover, according to \cite{absil2008optimization}, the Hessian of $l_x$ at $S$ is given by
\begin{align}
{\rm Hess}\,l_x(S)[\xi] = {\rm D} \nabla l_x (S)[\xi] = -\frac{1}{2}S^{-1}\xi S^{-1}, \label{43}
\end{align}
and
\begin{align}
{\rm D}^2l_x(S)[\xi_1,\xi_2] = {\rm tr}({\rm Hess}\,l_x(S)[\xi_1]\xi_2). \label{44}
\end{align}
Substituting \eqref{43} into \eqref{44}, we obtain that
\begin{align}
{\rm D}^2l_x(S)[\xi_1,\xi_2] = -\frac{1}{2} {\rm tr}(S^{-1}\xi_1S^{-1}\xi_2). \label{Hessian}
\end{align}
That is, ${\rm D}^2\,l_x(S)[\xi_1,\xi_2]$ is independent of $x$.
Hence, from \eqref{metric}, \eqref{Fisher_metric}, and \eqref{Hessian}, we obtain that
\begin{align*}
g_S^{\rm FIM}(\xi_1,\xi_2) = -{\rm D}^2l_x(S)[\xi_1,\xi_2]= \frac{1}{2} \langle \xi_1,\xi_2\rangle_S.
\end{align*}
Thus, Riemannian metric \eqref{metric} is essentially the same with Fisher information metric $g_S^{\rm FIM}$.

\subsection{Quotient manifold theorem} \label{ape_quotient}
This appendix explains how to use the following quotient manifold theorem as shown in Theorem 21.10 in \cite{lee2013intro} in our discussion of Section \ref{Sec3C}.

\begin{proposition}
Suppose that $\mathcal{G}$ is a Lie group acting smoothly, freely, and properly on a smooth manifold $\mathcal{M}$.
Then, the orbit space $\mathcal{M}/\mathcal{G}$ is a topological manifold of dimension equal to
${\rm dim}\, \mathcal{M} - {\rm dim}\, \mathcal{G}$, and has a unique smooth structure with the property that the quotient map $\pi:\mathcal{M}\rightarrow \mathcal{M}/\mathcal{G}$ is a smooth submersion.
\end{proposition}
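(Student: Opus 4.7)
The plan is to prove the Quotient Manifold Theorem by first establishing that $\mathcal{M}/\mathcal{G}$ is a topological manifold of the correct dimension, then endowing it with a smooth structure via slices, and finally verifying the submersion and uniqueness properties. I will write $\pi:\mathcal{M}\to \mathcal{M}/\mathcal{G}$ for the quotient map and let $d=\dim \mathcal{M}-\dim \mathcal{G}$.

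First I would establish the topological properties. Since each $g\in\mathcal{G}$ acts by a diffeomorphism, $\pi$ is an open map: the preimage of $\pi(U)$ is $\bigcup_{g\in\mathcal{G}} g\cdot U$, which is open in $\mathcal{M}$. Openness of $\pi$ together with second-countability of $\mathcal{M}$ gives second-countability of $\mathcal{M}/\mathcal{G}$. For the Hausdorff property, consider the orbit relation $R=\{(x,g\cdot x)\in \mathcal{M}\times \mathcal{M}\mid g\in \mathcal{G}\}$, which equals the image of the map $\Psi:\mathcal{G}\times \mathcal{M}\to \mathcal{M}\times \mathcal{M}$, $\Psi(g,x)=(x,g\cdot x)$. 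Properness of the action is equivalent to $\Psi$ being a proper map, and a standard lemma then shows $R$ is closed in $\mathcal{M}\times \mathcal{M}$; combined with openness of $\pi$, this implies $\mathcal{M}/\mathcal{G}$ is Hausdorff.

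Second, and this is the heart of the proof, I would construct smooth charts on $\mathcal{M}/\mathcal{G}$ via a slice theorem. For each $x\in \mathcal{M}$, the orbit map $\theta_x:\mathcal{G}\to \mathcal{M}$, $g\mapsto g\cdot x$, is an injective immersion by freeness and properness; properness upgrades this to an embedding, so the orbit $\mathcal{G}\cdot x$ is a closed embedded submanifold of dimension $\dim \mathcal{G}$. By the constant-rank theorem, one can choose a local transversal slice $S_x\subset \mathcal{M}$ of dimension $d$ through $x$ such that $T_xS_x$ is complementary to $T_x(\mathcal{G}\cdot x)$. Shrinking $S_x$ if necessary and exploiting properness/freeness, the map $\mathcal{G}\times S_x\to \mathcal{M}$, $(g,s)\mapsto g\cdot s$, becomes a diffeomorphism onto a saturated open neighborhood of the orbit. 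Consequently $\pi|_{S_x}$ is a homeomorphism of $S_x$ onto an open neighborhood of $[x]$, giving a chart on $\mathcal{M}/\mathcal{G}$ of dimension $d$.

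Third, I would show the resulting atlas is smooth and that $\pi$ becomes a submersion. If $S_x$ and $S_y$ are two slices with overlapping images in $\mathcal{M}/\mathcal{G}$, the transition function is obtained by composing $\pi|_{S_x}^{-1}$ with the smooth map $s\mapsto$ (unique element of $S_y$ in the orbit of $s$), which is smooth because it factors through the diffeomorphism $\mathcal{G}\times S_y\to$ (tube around $\mathcal{G}\cdot y$). In these charts $\pi$ is locally a projection $\mathcal{G}\times S_x\to S_x$, hence a smooth submersion. Uniqueness of the smooth structure is immediate: any two smooth structures making $\pi$ a submersion would have identity as a local diffeomorphism between them, because a smooth submersion admits smooth local sections and any two such structures must then agree on overlapping charts.

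The main obstacle is the slice construction in step two: proving that $(g,s)\mapsto g\cdot s$ is a diffeomorphism from a neighborhood of $\{e\}\times \{x\}$ onto a saturated open set requires combining the inverse function theorem (which gives only a local diffeomorphism near $(e,x)$) with a properness argument to rule out orbits that re-enter the slice. This part is where freeness and properness are both essentially used, and handling it rigorously is the technical core of Lee's proof of Theorem 21.10.
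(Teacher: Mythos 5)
The paper does not actually prove this proposition: it is imported verbatim as Theorem 21.10 of Lee's \emph{Introduction to Smooth Manifolds} \cite{lee2013intro}, and the appendix's only work is to verify its hypotheses (the action \eqref{action} of $O(n)$ on $N$ is smooth, free, and proper) so that the theorem can be invoked in Section \ref{Sec3C}. Your proposal therefore goes beyond the paper, sketching the proof of the quotient manifold theorem itself, and your sketch follows what is in fact Lee's own route: openness of $\pi$ and second countability of the quotient, Hausdorffness via closedness of the orbit relation (using that a proper map into a locally compact Hausdorff space is closed), charts built from slices $S_x$ of dimension $\dim\mathcal{M}-\dim\mathcal{G}$ via an equivariant tube $\mathcal{G}\times S_x\to\mathcal{M}$, smoothness of transitions and the local-projection form of $\pi$, and uniqueness of the smooth structure via smooth local sections of a submersion. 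All of these steps are correct in outline; the freeness-gives-immersion claim for the orbit map and the local-section uniqueness argument are both standard and sound. The one caveat is the one you flag yourself: the assertion that, after shrinking, $(g,s)\mapsto g\cdot s$ is a diffeomorphism onto a saturated open set (equivalently, that a sufficiently small slice meets each orbit at most once) is stated rather than established, and this is exactly where properness does its irreplaceable work; as written, your text is a faithful proof outline rather than a complete proof. Given that the paper's stance is simply ``this is a known theorem,'' either completing that tube argument or doing what the paper does --- citing \cite{lee2013intro} --- would close the gap.
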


\noindent
Here, the action $\cdot$ of Lie group $\mathcal{G}$ on a smooth manifold $\mathcal{M}$ is called 
\begin{itemize}
\item free if
$\{g\in \mathcal{G}\,|\, g\cdot p = p\} = \{e\}$
for each $p\in \mathcal{M}$, where $e$ is the identity of $\mathcal{G}$.
\item proper if the map $f:\mathcal{G}\times \mathcal{M} \rightarrow \mathcal{M}\times \mathcal{M}$ defined by $(g,p)\mapsto (g\cdot p,p)$ is a proper map.
That is, for every compact set $K\in \mathcal{M}\times \mathcal{M}$, the preimage $f^{-1}(K)\subset \mathcal{G}\times \mathcal{M}$ is compact.
\end{itemize}

We can apply the quotient manifold theorem in our case,
if \eqref{action} is a free and proper action.
This is because
the orthogonal group $O(n)$ is a Lie group, and \eqref{action} is a smooth action on the
smooth manifold $N$.
We thus confirm that action \eqref{action} is free and proper in Section \ref{Sec3C}.

\subsection{Proof of Theorem \ref{thm3}} \label{apeA}

In this appendix, we provide the proof of Theorem \ref{thm3} without deriving specific expressions of the vertical and horizontal spaces.
More concretely, we prove a more general theorem, and point out that Theorem \ref{thm3} is a corollary of the general theorem.

Let $\mathcal{M}$ be a Riemannian manifold with the Riemannian metric $\langle \cdot, \cdot \rangle$,
and let $\mathcal{G}$ be a group that smoothly acts on $\mathcal{M}$.
Here, we call $\phi_g:\mathcal{M}\rightarrow \mathcal{M}$ a smooth group action if $\phi_g$ is smooth and satisfies the following.
\begin{enumerate}
\item  For any $g_1,g_2\in \mathcal{G}$ and any $x\in\mathcal{M}$, $\phi_{g_1g_2}(x)=\phi_{g_1}(\phi_{g_2}(x))$ holds.
\item For the identity element $1_{\mathcal{G}}\in \mathcal{G}$ and any $x\in\mathcal{M}$, $\phi_{1_\mathcal{G}}(x) = x$ holds.
\end{enumerate}  
We write the derivative map of $\phi_g$ at $x\in \mathcal{M}$ as ${\rm D}\phi_g(x): T_x \mathcal{M}\rightarrow T_{\phi_g(x)}\mathcal{M}$.
By definition,
\begin{align*}
{\rm D} \phi_{1_\mathcal{G}}(x) &= {\rm D}(\phi_{g^{-1}}\circ \phi_g)(x)  \\
&= {\rm D}\phi_{g^{-1}}(\phi_g(x)) \circ {\rm D}\phi_g(x), 
\end{align*}
and thus
\begin{align}
{\rm D}\phi_{g^{-1}}(\phi_g(x)) = ({\rm D}\phi_g(x))^{-1}. \label{37}
\end{align}
Let $\mathcal{M}/\mathcal{G}$ be a quotient Riemannian manifold of $\mathcal{M}$ with the canonical projection $\pi:\mathcal{M}\rightarrow \mathcal{M}/\mathcal{G}$.
That is,
$\pi (x) = [x]$ for any $x\in \mathcal{M}$,
where $[x]:= \{ x_1\in \mathcal{M}\, |\, x = \phi_g(x_1)\,\, {\rm for\,\, some}\,\, g\in \mathcal{G}\}$.
Let $\mathcal{V}_x:= T_x\pi^{-1}([x])$ be the vertical space in $T_x \mathcal{M}$, and let $\mathcal{H}_x$ be the horizontal space that is the orthogonal complement of $\mathcal{V}_x$ with respect to the metric $\langle \cdot ,\cdot \rangle$.
Let $V$ be a vector space in $T_x\mathcal{M}$, and let
\begin{align*}
{\rm D}\phi_g(x)[V]:= \{ {\rm D}\phi_g(x)[\xi]\,|\, \xi\in V\}.
\end{align*}

\begin{lemma} \label{lem2}
For any $g\in \mathcal{G}$ and $x\in \mathcal{M}$,
\begin{align}
\mathcal{V}_{\phi_g(x)} = {\rm D}\phi_g(x)[\mathcal{V}_x]. \label{lemma_ver}
\end{align}
\end{lemma}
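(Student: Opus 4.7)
The plan is to identify the fiber $\pi^{-1}([x])$ with the orbit of $x$ under $\mathcal{G}$, observe that $\phi_g$ restricts to a diffeomorphism of this fiber, and then pass to tangent vectors using smooth curves. The forward inclusion follows by pushing a curve through $\phi_g$; the reverse one follows by applying the same argument with $g^{-1}$ and invoking \eqref{37}.

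First I would note that, by the definition of $[x]$, we have $\pi^{-1}([x])=\{\phi_h(x)\mid h\in\mathcal{G}\}$. For any $y=\phi_h(x)$ in this set, $\phi_g(y)=\phi_{gh}(x)$ again lies in the orbit, and conversely every element of the orbit can be written as $\phi_g(\phi_{g^{-1}h}(x))$. Hence $\phi_g$ maps the fiber $\pi^{-1}([x])$ bijectively onto itself; together with $[x]=[\phi_g(x)]$, this shows $\phi_g$ restricts to a smooth self-diffeomorphism of $\pi^{-1}([x])=\pi^{-1}([\phi_g(x)])$.

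Second, for the inclusion $\mathrm{D}\phi_g(x)[\mathcal{V}_x]\subseteq\mathcal{V}_{\phi_g(x)}$, pick $\xi\in\mathcal{V}_x=T_x\pi^{-1}([x])$ and choose a smooth curve $\gamma:(-\varepsilon,\varepsilon)\to\pi^{-1}([x])$ with $\gamma(0)=x$ and $\dot\gamma(0)=\xi$. Then $\phi_g\circ\gamma$ is a smooth curve in $\pi^{-1}([\phi_g(x)])$ passing through $\phi_g(x)$ at $t=0$, whose velocity at $t=0$ is $\mathrm{D}\phi_g(x)[\xi]$ by the chain rule. This velocity therefore lies in $T_{\phi_g(x)}\pi^{-1}([\phi_g(x)])=\mathcal{V}_{\phi_g(x)}$.

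For the reverse inclusion $\mathcal{V}_{\phi_g(x)}\subseteq\mathrm{D}\phi_g(x)[\mathcal{V}_x]$, apply exactly the same argument with $(g,x)$ replaced by $(g^{-1},\phi_g(x))$ to obtain
\[
\mathrm{D}\phi_{g^{-1}}(\phi_g(x))[\mathcal{V}_{\phi_g(x)}]\subseteq \mathcal{V}_{\phi_{g^{-1}}(\phi_g(x))}=\mathcal{V}_x,
\]
and then apply $\mathrm{D}\phi_g(x)$ to both sides, using \eqref{37} to cancel ${\rm D}\phi_g(x)\circ{\rm D}\phi_{g^{-1}}(\phi_g(x))$ to the identity on $T_{\phi_g(x)}\mathcal{M}$. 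The only subtlety, which I expect to be the main obstacle to stating the argument cleanly, is ensuring that $\pi^{-1}([x])$ is an embedded submanifold of $\mathcal{M}$ so that tangent vectors to the fiber (inside $T_x\mathcal{M}$) genuinely match the velocities of curves that remain in the fiber; in our setting this is supplied by the quotient manifold theorem recalled in Appendix \ref{ape_quotient}, which guarantees $\pi:\mathcal{M}\to\mathcal{M}/\mathcal{G}$ is a submersion whose fibers are embedded submanifolds. Once this is in hand, everything above is routine bookkeeping with the chain rule.
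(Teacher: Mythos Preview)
Your proof is correct and follows essentially the same approach as the paper: both arguments use smooth curves in the fiber together with the chain rule and \eqref{37}. The only organizational difference is that the paper proves the single inclusion $\mathcal{V}_{\phi_g(x)}\subset\mathrm{D}\phi_g(x)[\mathcal{V}_x]$ via curves and then closes by a dimension count, whereas you prove the opposite inclusion first and obtain the remaining one by applying the same curve argument to $g^{-1}$; either route works equally well here.
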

\begin{proof}
Let $\xi \in \mathcal{V}_{\phi_g(x)}=T_{\phi_g(x)} \pi^{-1}([\phi_g(x)])$.
Then, there exists a curve $\gamma$ such that $\gamma(0) = \phi_g(x)$ and $\dot{\gamma}(0)=\xi$.
Because $\mathcal{G}$ acts on $\mathcal{M}$, $\gamma_0(t):= \phi_{g^{-1}}(\gamma(t))$ is on $\pi^{-1}([x])$.
We have that
$\gamma_0(0) = \phi_{g^{-1}}(\phi_g(x)) = x$,
and
$\dot{\gamma}_0(t) = {\rm D}\phi_{g^{-1}}(\gamma(t))[\dot{\gamma}(t)]$.
Hence, it follows from \eqref{37} that
\begin{align*}
\dot{\gamma}_0(0) &= {\rm D}\phi_{g^{-1}}(\phi_g(x))[\xi] \\
 &= ({\rm D}\phi_g(x))^{-1}[\xi] \in T_x\pi^{-1}([x]) = \mathcal{V}_x,
\end{align*}
and thus
$\xi \in {\rm D}\phi_g(x) [\mathcal{V}_x]$.
That is,
\begin{align*}
\mathcal{V}_{\phi_g(x)} \subset {\rm D}\phi_g(x)[\mathcal{V}_x].
\end{align*}
Considering the dimension of both sides, we obtain \eqref{lemma_ver}. \qed
\end{proof}

Lemma \ref{lem2} implies the following theorem.

\begin{theorem} \label{thm5}
Suppose that the group action $\phi_g$ is an isometry in terms of Riemannian metric $\langle \cdot, \cdot \rangle$; i.e.,
for any $g\in \mathcal{G}$ and any $\xi_1,\xi_2\in T_x\mathcal{M}$,
\begin{align}
\langle {\rm D}\phi_g(x) [\xi_1], {\rm D}\phi_g(x) [\xi_2]\rangle_{\phi_g(x)} = \langle \xi_1,\xi_2\rangle_x. \label{invariant}
\end{align}
Then,
\begin{align}
\mathcal{H}_{\phi_g(x)} = {\rm D}\phi_g(x) [\mathcal{H}_x]. \label{39}
\end{align}
\end{theorem}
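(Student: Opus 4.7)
The plan is to combine Lemma \ref{lem2} with the isometry hypothesis \eqref{invariant} and then close the argument by a dimension count.

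First I would show the inclusion ${\rm D}\phi_g(x)[\mathcal{H}_x] \subset \mathcal{H}_{\phi_g(x)}$. Take any $\eta \in {\rm D}\phi_g(x)[\mathcal{H}_x]$, so that $\eta = {\rm D}\phi_g(x)[\xi]$ for some $\xi \in \mathcal{H}_x$. To verify that $\eta$ lies in $\mathcal{H}_{\phi_g(x)}$, i.e., in the orthogonal complement of $\mathcal{V}_{\phi_g(x)}$ with respect to $\langle \cdot,\cdot\rangle_{\phi_g(x)}$, pick any $\zeta \in \mathcal{V}_{\phi_g(x)}$. By Lemma \ref{lem2}, we can write $\zeta = {\rm D}\phi_g(x)[\nu]$ for some $\nu \in \mathcal{V}_x$. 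Then by the isometry property \eqref{invariant},
\begin{align*}
\langle \eta, \zeta \rangle_{\phi_g(x)} = \langle {\rm D}\phi_g(x)[\xi], {\rm D}\phi_g(x)[\nu] \rangle_{\phi_g(x)} = \langle \xi, \nu \rangle_x = 0,
\end{align*}
where the last equality follows from $\xi \in \mathcal{H}_x$ and $\nu \in \mathcal{V}_x$. Hence $\eta \in \mathcal{H}_{\phi_g(x)}$.

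For the reverse inclusion I would simply argue by dimension. Since $\phi_g$ is a diffeomorphism with inverse $\phi_{g^{-1}}$, the derivative ${\rm D}\phi_g(x)$ is a linear isomorphism from $T_x\mathcal{M}$ onto $T_{\phi_g(x)}\mathcal{M}$, so $\dim {\rm D}\phi_g(x)[\mathcal{H}_x] = \dim \mathcal{H}_x$. Moreover, because the same diffeomorphism restricts (via Lemma \ref{lem2}) to an isomorphism between $\mathcal{V}_x$ and $\mathcal{V}_{\phi_g(x)}$, we have $\dim \mathcal{V}_x = \dim \mathcal{V}_{\phi_g(x)}$, and therefore $\dim \mathcal{H}_x = \dim \mathcal{H}_{\phi_g(x)}$. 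Combining the inclusion with equality of dimensions yields \eqref{39}.

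Finally, to deduce Theorem \ref{thm3} from Theorem \ref{thm5} I would only have to observe that \eqref{touka} is exactly the isometry hypothesis \eqref{invariant} with $\mathcal{M}=N$, $\mathcal{G}=O(n)$, and $g=U$, so \eqref{39} gives $\mathcal{H}_{\phi_U(\Theta)} = {\rm D}\phi_U(\Theta)[\mathcal{H}_\Theta]$; then since ${\rm D}\phi_U(\Theta)[\bar{\xi}_\Theta] \in \mathcal{H}_{\phi_U(\Theta)}$ and ${\rm D}\pi(\phi_U(\Theta))[{\rm D}\phi_U(\Theta)[\bar{\xi}_\Theta]] = {\rm D}(\pi\circ \phi_U)(\Theta)[\bar{\xi}_\Theta] = {\rm D}\pi(\Theta)[\bar{\xi}_\Theta] = \xi$, uniqueness of the horizontal lift forces \eqref{14}. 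The only mildly delicate step is ensuring ${\rm D}\phi_g$ does map $\mathcal{V}_x$ bijectively onto $\mathcal{V}_{\phi_g(x)}$, but this is precisely the content of Lemma \ref{lem2} combined with the diffeomorphism property \eqref{37}, so no new obstacle arises.
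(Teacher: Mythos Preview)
Your proof is correct and follows essentially the same route as the paper: invoke Lemma~\ref{lem2} to identify $\mathcal{V}_{\phi_g(x)}$ with ${\rm D}\phi_g(x)[\mathcal{V}_x]$, use the isometry hypothesis \eqref{invariant} to obtain the inclusion ${\rm D}\phi_g(x)[\mathcal{H}_x]\subset \mathcal{H}_{\phi_g(x)}$, and finish with a dimension count. The only cosmetic difference is that the paper first records the intermediate identity $\mathcal{H}_{\phi_g(x)} = ({\rm D}\phi_g(x)[\mathcal{V}_x])^{\perp}$ before checking the inclusion, whereas you check orthogonality against $\mathcal{V}_{\phi_g(x)}$ directly; your final paragraph deducing \eqref{14} is likewise the content of the paper's Corollary~\ref{cor1}.
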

\begin{proof}
Taking the orthogonal complement of both sides of \eqref{lemma_ver}, we have that
\begin{align}
\mathcal{H}_{\phi_{g}(x)} = ({\rm D}\phi_g(x)[\mathcal{V}_x)])^{\perp}. \label{40}
\end{align}
Because \eqref{invariant} holds, we obtain that
$\langle {\rm D}\phi_g(x) [\xi_1], {\rm D}\phi_g(x) [\xi_2]\rangle_{\phi_g(x)}  = \langle \xi_1,\xi_2\rangle_x = 0$
for any $\xi_1\in \mathcal{V}_x$ and $\xi_2\in \mathcal{H}_x$.
This means that ${\rm D}\phi_g(x) [\xi_2]\in ({\rm D}\phi_g(x)[\mathcal{V}_x])^{\perp}$, which yields
${\rm D}\phi_g(x)[\mathcal{H}_x] \subset ({\rm D}\phi_g(x)[\mathcal{V}_x])^{\perp}$.
Considering the dimension of both sides, we have that
\begin{align}
{\rm D}\phi_g(x)[\mathcal{H}_x] = ({\rm D}\phi_g(x)[\mathcal{V}_x])^{\perp}. \label{41}
\end{align}
It follows from \eqref{40} and \eqref{41} that \eqref{39} holds. \qed
\end{proof}

Theorem \ref{thm5} yields the following corollary.
\begin{corollary} \label{cor1}
Suppose that the group action $\phi_g$ is an isometry in terms of Riemannian metric $\langle \cdot, \cdot \rangle$; i.e., \eqref{invariant} holds
for any $g\in \mathcal{G}$ and any $\xi_1,\xi_2\in T_x\mathcal{M}$.
Then,
\begin{align}
\bar{\xi}_{\phi_g(x)} = {\rm D}\phi_{g}(x)[\bar{\xi}_x], \label{399}
\end{align}
where
$\bar{\xi}_{x}$ and $\bar{\xi}_{\phi_g(x)}$ are the horizontal lifts of $\xi\in T_{[x]}(\mathcal{M}/\mathcal{G})$ at $x\in \mathcal{M}$ and $\phi_g(x)\in \mathcal{M}$, respectively.
\end{corollary}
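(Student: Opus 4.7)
The plan is to verify that ${\rm D}\phi_g(x)[\bar\xi_x]$ satisfies the two defining properties of the horizontal lift of $\xi$ at $\phi_g(x)$, and then invoke uniqueness. Recall from the exposition that $\bar\zeta_y$ is characterized as the unique vector in $\mathcal{H}_y$ with ${\rm D}\pi(y)[\bar\zeta_y] = \zeta$.

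First I would check horizontality. By definition, $\bar\xi_x \in \mathcal{H}_x$, so Theorem \ref{thm5} applied to $\phi_g$ (whose hypothesis, isometry in terms of $\langle\cdot,\cdot\rangle$, is exactly what Corollary \ref{cor1} assumes) yields
\begin{align*}
{\rm D}\phi_g(x)[\bar\xi_x] \in {\rm D}\phi_g(x)[\mathcal{H}_x] = \mathcal{H}_{\phi_g(x)}.
\end{align*}
Thus ${\rm D}\phi_g(x)[\bar\xi_x]$ is indeed a horizontal vector at $\phi_g(x)$.

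Second I would check that this vector projects to $\xi$. The key identity is $\pi\circ\phi_g = \pi$, which holds because $\phi_g(x)$ and $x$ lie in the same equivalence class $[x]$ for every $x\in\mathcal{M}$. Differentiating both sides at $x$ via the chain rule gives
\begin{align*}
{\rm D}\pi(\phi_g(x))\circ {\rm D}\phi_g(x) = {\rm D}\pi(x),
\end{align*}
so that
\begin{align*}
{\rm D}\pi(\phi_g(x))\bigl[{\rm D}\phi_g(x)[\bar\xi_x]\bigr] = {\rm D}\pi(x)[\bar\xi_x] = \xi,
\end{align*}
using the defining projection property of $\bar\xi_x$ in the last equality.

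Having verified that ${\rm D}\phi_g(x)[\bar\xi_x]$ lies in $\mathcal{H}_{\phi_g(x)}$ and is mapped to $\xi$ by ${\rm D}\pi(\phi_g(x))$, the uniqueness of the horizontal lift at $\phi_g(x)$ forces ${\rm D}\phi_g(x)[\bar\xi_x] = \bar\xi_{\phi_g(x)}$, which is \eqref{399}. I do not anticipate a serious obstacle here: the substantive geometric content (the compatibility of the horizontal distribution with the isometric group action) has already been packaged into Theorem \ref{thm5}, so the corollary amounts to combining that with the trivial identity $\pi\circ\phi_g=\pi$ and the uniqueness clause in the definition of horizontal lift. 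The only subtlety worth stating explicitly is that the equality $\pi\circ\phi_g=\pi$ must be established at the level of $\mathcal{M}$ (not merely up to equivalence) before differentiating, but this is immediate from the definition of $\pi$ as sending a point to its orbit.
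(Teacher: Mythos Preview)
Your proposal is correct and follows essentially the same approach as the paper's proof: both verify that ${\rm D}\phi_g(x)[\bar\xi_x]$ lies in $\mathcal{H}_{\phi_g(x)}$ via Theorem~\ref{thm5} and projects to $\xi$ via the identity $\pi\circ\phi_g=\pi$ together with the chain rule, then conclude by uniqueness of the horizontal lift. The only difference is cosmetic---you check horizontality first and projection second, while the paper does it in the opposite order.
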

\begin{proof}
Because $\pi\circ \phi_g = \pi$,
\begin{align}
{\rm D}(\pi\circ \phi_g)(x)[\bar{\xi}_x] = {\rm D}\pi(x)[\bar{\xi}_x] = \xi, \label{3991}
\end{align}
where the second equality follows from the definition of the horizontal lift.
Moreover, by the chain rule,
\begin{align}
{\rm D}(\pi\circ \phi_g)(x)[\bar{\xi}_x] = {\rm D}\pi(\phi_g(x))[{\rm D}\phi_g(x)[\bar{\xi}_x]]. \label{3992}
\end{align}
It follows from \eqref{3991} and \eqref{3992} that
${\rm D}\pi(\phi_g(x))[{\rm D}\phi_g(x)[\bar{\xi}_x]] = \xi$,
and Theorem \ref{thm5} yields
${\rm D}\phi_g(x)[\bar{\xi}_x]\in \mathcal{H}_{\phi_g(x)}$.
By the definition of the horizontal lift, we obtain \eqref{399}. \qed
\end{proof}

Theorem \ref{thm3} follows from Corollary \ref{cor1}.
This is because the group action $\phi_U(\Theta):=U\circ \Theta$ is an isometry, as shown in \eqref{touka}.

\subsection{Proof of Theorem \ref{Thm1}} \label{Proof_Thm1}

Because $T_{\Theta} N = \mathcal{V}_{\Theta} \oplus \mathcal{H}_{\Theta}$, $\eta$ can be uniquely decomposed into
\begin{align*}
\eta = \eta^v + \eta^h,\quad \eta^v\in \mathcal{V}_{\Theta},\,\, \eta^h\in \mathcal{H}_{\Theta}.
\end{align*}
Since $\eta^v\in \mathcal{V}_{\Theta}$, there exists $X\in {\rm Skew}(n)$ such that
\begin{align*}
\eta^v=(-XA+AX, -XB, CX).
\end{align*}
Thus, $\eta^h$ can be described as
\begin{align*}
\eta^h = (a+XA-AX,b+XB,c-CX).
\end{align*}
Because $\eta^h\in \mathcal{H}_{\Theta}$, we obtain that
\begin{align*}
&{\rm sk}(2(a+XA-AX)A^{-1}+B(b+XB)^{\top} \\
+& C^{\top}(c-CX))=0.
\end{align*}
It follows from this equation that \eqref{linear} holds, because $a^{\top}=a$ and $X^{\top}=-X$.

\subsection{Proof of Theorem \ref{Thm2}} \label{Proof_Thm2}

Using the Kronecker product and vec-operator, the operators $\mathcal{L}_0$ and $\mathcal{L}_1$ have the matrix representations
$K_0= A^{-1}\otimes A+A\otimes A^{-1} - 2I_{n^2}$ and $K_1 = I_n\otimes (BB^{\top}+C^{\top}C) + (BB^{\top}+C^{\top}C)\otimes I_n$, where $\otimes$ denotes the Kronecker product.
Both are symmetric, and $K_1$ is positive semidefinite \cite{antoulas2005approximation}.
Thus, $\mathcal{L}_1\geq 0$.
Note also that both summands of $K_1$ and thus of $\mathcal{L}_1$ are positive semidefinite, whence 
\begin{align}
\mathcal{L}_1(X)=0 \Rightarrow (BB^{\top}+C^{\top}C)X=0. \label{19}
\end{align}

If $\lambda_j, \lambda_k\in \lambda(A)$ with corresponding orthonormal eigenvectors $v_j, v_k$, then
\begin{align*}
\mathcal{L}_0 (v_jv_k^{\top}) = \mu_{jk} v_jv_k^{\top},
\end{align*}
where $\mu_{jk}:= \frac{(\lambda_j-\lambda_k)^2}{\lambda_j \lambda_k}$.
From the $n$ orthonormal eigenvectors $v_j$, $j=1,2,\ldots, n$, of the matrix $A$, we thus obtain $n^2$ orthonormal eigenvectors $v_jv_k^{\top}$, $j,k=1,2,\ldots, n$, of the linear matrix mapping.
Because $\mu_{jk}\geq 0$ for all $j,k$, it follows that $\mathcal{L}_0\geq 0$.
Together with $\mathcal{L}_1\geq 0$, this implies that 
\begin{align}
{\rm Ker}\, \mathcal{L}= {\rm Ker}\, \mathcal{L}_1 \cap {\rm Ker}\, \mathcal{L}_0. \label{20}
\end{align}
See Fact 8.7.3 in \cite{Bern09}.
Moreover,
the kernel of $\mathcal{L}_0$ is spanned by the matrices $v_jv_k^{\top}+v_kv_j^{\top}$ and $v_jv_k^{\top}-v_kv_j^{\top}$ with $\lambda_j=\lambda_k$, $j,k = 1,2,\ldots, n$.
That is,
\begin{align*}
{\rm Ker}\, \mathcal{L}_0 \cap {\rm Skew}(n) = {\rm span}\{ v_jv_k^{\top}-v_kv_j^{\top}\,|\, \lambda_j=\lambda_k \}.
\end{align*}

The matrix $A$ can be expressed as 
\begin{align*}
A=V{\rm diag}(\lambda_{n_1} I_{n_1}, \lambda_{n_2}I_{n_2},\ldots, \lambda_{n_l} I_{n_l}) V^{\top},
\end{align*}
where $n_1+n_2+\cdots +n_l=n$,  $\lambda_1=\cdots =\lambda_{n_1}<\lambda_{n_1+1}=\cdots =\lambda_{n_2}<\cdots <\lambda_{n_1+n_2+\cdots +n_{l-1}+1}=\cdots =\lambda_{n_l}$,
and after suitable ordering and partitioning, $V=\begin{pmatrix}
V_1 & \cdots & V_l
\end{pmatrix}=\begin{pmatrix}
v_1 & \cdots & v_n
\end{pmatrix}$
is orthogonal with ${\rm Im}\, V_j = {\rm Ker}(\lambda_{n_j}I_{n}-A)$.
We thus obtain that
\begin{align*}
 &\,\, {\rm Ker}\, \mathcal{L}_0 \cap {\rm Skew}(n) \\
 =& \{ V{\rm diag}(S_1,S_2,\ldots, S_l)V^{\top}\,|\, S_j\in {\rm Skew}(n_j)\}.
\end{align*}
To see this, note that the right hand side is the linear subspace of ${\rm Skew}(n)$, spanned by 
\begin{align*}
v_jv_k^{\top}-v_kv_j^{\top} = V(e_je_k^{\top}-e_ke_j^{\top})V^{\top},
\end{align*}
where $\lambda_j=\lambda_k$ and $e_j$ is the $j$-th unit vector in ${\bf R}^n$.
Thus, it follows from \eqref{19} and \eqref{20} that $U\in {\rm Ker}\,\mathcal{L} \cap {\rm Skew}(n)$ implies $U=V{\rm diag}(S_1,S_2,\ldots, S_l)V^{\top}$ with $(BB^{\top}+C^{\top}C)U=0$.
In particular, we have that
\begin{align*}
\begin{cases}
0 =B^{\top}UV_j =B^{\top}V_jS_j \\
0 = CUV_j = CV_jS_j
\end{cases}
\end{align*}
for $j=1,2,\ldots, l$, and thus
\begin{align*}
{\rm Ker}\,(\lambda_{n_j}I_{n}-A) \cap {\rm Ker}\,B^{\top}\cap {\rm Ker}\, C \supset {\rm Im}\,(V_jS_j).
\end{align*}
Therefore,
\begin{align}
\dim({\rm Ker}\,(\lambda_{n_j}I_{n}-A) \cap {\rm Ker}\,B^{\top}\cap {\rm Ker}\, C) \geq {\rm rank}\,S_j. \label{21}
\end{align}
Because each $S_j\in {\rm Skew}(n_j)$ necessarily has even rank,
assumption \eqref{assumption} and \eqref{21} yield that ${\rm rank}\,S_j=0$ for $j=1,2,\ldots, l$, whence $U=0$.
This implies that 
\begin{align}
{\rm Ker}\,\mathcal{L}\cap {\rm Skew}(n)=\{0\}, \label{22}
\end{align} 
or equivalently ${\rm Ker}\,\mathcal{L}\subset {\rm Sym}(n)$.
Eq.\,\eqref{22} implies that $\mathcal{L}:{\rm Skew}(n)\rightarrow {\rm Skew}(n)$ is an automorphism.

\ifCLASSOPTIONcaptionsoff
  \newpage
\fi



\bibliographystyle{IEEEtran}




\end{document}